\newtheorem{theorem}{Theorem}[section]
\newtheorem{corollary}[theorem]{Corollary}
\newtheorem{definition}[theorem]{Definition}
\newtheorem{lemma}[theorem]{Lemma}
\numberwithin{theorem}{section}
\newcommand{\mc}{\mathcal}
\newcommand{\ra}{\rightarrow}
\newcommand{\mb}{\mathbb}
\newcommand{\F}{\mathcal{F}}
\title[]{Quickest Search over Brownian Channels}
\author[]{Erhan Bayraktar}\thanks{This work is supported by the National Science Foundation under grant DMS-1118673.  We would like to thank the referees, associate editor, and editor for their helpful comments, which have greatly improved the paper} 
\address[Erhan Bayraktar]{Department of Mathematics, University of Michigan, 530 Church Street, Ann Arbor, MI 48104, USA}
\email{erhan@umich.edu}
\author[]{Ross Kravitz}
\address[Ross Kravitz]{Department of Mathematics, University of Michigan, 530 Church Street, Ann Arbor, MI 48104, USA}
\email{ross.kravitz@gmail.com}
\begin{document}
\begin{abstract} In this paper we resolve an open problem proposed by \cite{MR2849122}.  Consider a sequence of Brownian Motions with unknown drift equal to one or zero, which may be observed one at a time.  We give a procedure for finding, as quickly as possible, a process which is a Brownian Motion with nonzero drift.  This original quickest search problem, in which the filtration itself is dependent on the observation strategy, is reduced to a single filtration impulse control and optimal stopping problem, which is in turn reduced to an optimal stopping problem for a \emph{reflected} diffusion, which can be \emph{explicitly} solved.
\end{abstract}
\keywords{Bayesian quickest search, optimal switching, optimal stopping, reflected diffusion.}
\maketitle

\section{Introduction}

In the classical sequential analysis problem, one observes an i.i.d. sequence of random variables $\{X_1,X_2,\ldots\}$ which all either have distribution $Q_0$ or $Q_1$.  In deciding whether $Q_0$ or $Q_1$ is present, the agent seeks to minimize some combination of the probability of choosing the wrong distribution and the total expected observation time.  The first results in this field are due to Wald in \cite{MR0013275}, who solves the above problem.  The problem was first addressed in continuous time by Shiryaev in \cite{MR0272119}.  In one continuous version of the sequential analysis problem, the agent observes a Brownian Motion with an unknown drift, which satisfies one of two hypotheses.  He seeks to minimize some combination of the probability of misidentifying the drift and the total expected observation time.  For a comprehensive introduction to the topic, the reader is referred to \cite{MR2482527}.  \cite{MR2256030} also has an excellent chapter on the subject.

Since their original formulation, many variants of these sequential analysis problems have been studied.  The problem that we study is known as a quickest search problem.  In this class of problems, the single channel of information available in the classical problem is replaced with some collection of channels, which may be finite or infinite.  By observing only one channel at a time, the goal is to find a channel satisfying some hypothesis, all the while keeping observation costs to a minimum.  This type of formulation is a natural one for many physical applications: due to hardware or bandwith constraints, our ability to monitor a system may often be constrained to a single part at a time.  We refer to \cite{MR2849122} for a further description of possible applications.  

In \cite{MR2849122}, the following problem is studied: consider countably many sequences $\{Y^i_k : k = 1,2,\ldots \}$, $i = 1,\ldots$.  For each $i$, $\{Y^i_k : k = 1, 2, \ldots \}$ are i.i.d. random variables which obey one of two hypotheses: under $H_0$, $Y^i_k \sim Q_0, k = 1,2,\ldots$, and under $H_1$, $Y^i_k \sim Q_1, k = 1,2,\ldots$, where $Q_0$ and $Q_1$ are two distinct, but equivalent, distributions.  At each discrete time $k$, one can take one of three actions: stop sampling and choose a sequence which is believed to satisfy $H_1$, continue observation of the same channel, or continue observation in a new channel.  Over all possible observation strategies and their associated stopping times $\tau$, our goal is to minimize $P(H^{s_\tau} = H_0) + cE [\tau]$, where here $H^{s_\tau}$ is the true condition of the channel observed at time $\tau$, and $c$ represents the cost of making one observation.

The authors of \cite{MR2849122} solve the problem above, in the sense that they find an optimal observation strategy and stopping time, both of which can be computed as hitting times of an underlying posterior process.  In this same paper, the authors ask for a solution to the corresponding problem in continuous time, and it is this task which we take up.  In the quickest search problem in continuous time, the basic object of study is a sequence of observable processes $\xi^i_t = \theta^i t + B^i_t$, $i \in \mb{N}$, where each $B^i$ is an independent Brownian Motion and each $\theta^i$ is an independent Bernoulli random variable.  For some prior $\hat{\pi} \in (0,1)$ and each $i$, $P_{\hat{\pi}}(\theta^i = 0) = \hat{\pi}$, and $P_{\hat{\pi}}(\theta^i = 1) = 1 - \hat{\pi}$.  We say that $\xi^i$ satisfies hypothesis $H_0$ if $\theta^i = 0$ and $\xi^i$ satisfies hypothesis $H_1$ if $\theta^i = 1$.  The general objective in the quickest search problem is to find, as quickly as possible, a process $\xi^i$ which satisfies $H_1$.  Observing any process for $t$ units of time incurs a cost of $ct$, where $c>0$ is a constant.  One may observe only one $\xi^i$ at a time, but one can instantaneously change between observed processes at any time. In discrete time, the problem may be approximated by a finite horizon version solvable by backwards induction, but this method is not available in continuous time.  Instead, the problem is solved by formulating it as a free boundary ordinary differential equation.

In the literature, there has been a large amount of research into quickest search problems, although the majority of it has been in discrete time.  Since the focus of our paper is a continuous time problem, we refer the reader to the references in \cite{MR2849122} for an excellent description of the discrete-time literature, as well as \cite{MR2790111} for a multi-channel quickest detection problem in discrete time.  The continuous time literature is sparser, but there are several papers addressing a problem similar to ours.  The three papers \cite{MR0200090}, \cite{Posner:2006:CSD:2263257.2267360}, and \cite{MR0345225} all consider the quickest search variant in which there are a finite number of channels, and exactly one channel satisfies $H_1$.  For comparison, in the problem we study, there are infinitely many channels and no knowledge of how many channels satisfy $H_1$.  Also in continuous time, in \cite{MR2384820}, the authors study a multi-channel problem where all channels are observed simultaneously: here the goal is to find the common intensity rate of a collection of Poisson processes.  In \cite{MR2299631}, two Poisson channels are observed simultaneously to determine the one in which disruption occurs first.  Other problems involving multiple stopping times include \cite{MR2395575} and \cite{MR2416002}, although our problem is closer in spirit to a multiple switching problem than one of multiple stopping.

The technical details of continuous time formulations are somewhat subtle, and indeed, in the three papers closest to ours, \cite{MR0200090}, \cite{Posner:2006:CSD:2263257.2267360}, and \cite{MR0345225} , each purports to fix an error in the previous one.  For example, in \cite{MR0200090}, an optimal switching strategy is described in the following way: consider $N$ diffusions $\gamma^n_t$, $1 \leq n \leq N$, which are supposed to represent the posterior for each channel, and take the strategy that when $\gamma^i_t$ is largest among the $N$ processes, observe channel $i$.  The inherent problem in such a strategy is that when, for example, $\gamma^i_t = \gamma^j_t$, the sign of $\gamma^i_s - \gamma^j_s$ will oscillate infinitely many times when $s$ is in a neighborhood of $t$.  So, such a strategy would necessarily switch between channels $i$ and $j$ infinitely many times in that neighborhod, and this is physically unfeasible.

Indeed, even in the discrete time case, certain technical details are not completely developed.  To rigorously describe the set of all observation strategies in our problem, one must talk of different filtrations, since by choosing which channel to observe at different times, we are modulating the exact information which we receive.  Therefore, one of the aims of this paper is describe a continuous time quickest search problem in a mathematically rigorous way.  At the same time, however, we can obtain a closed form analytic solution to our quickest search problem, both for the value function and the optimal stopping time, and so from a practical perspective, our results may be useful in the discrete time case if calculating the solution to that problem is too expensive.  The theory of extended weak convergence, in \cite{aldous1981weak}, provides a possible way to translate insights from continuous time optimal stopping back to discrete time.

The outline of this paper is as follows: we will first describe how problems in continuous time sequential analysis can be formulated as optimal stopping problems, using the building block of the one channel problem.  Much of this material is derived from parts of \cite{MR2256030}, Chapter $6$, and \cite{MR2374974}, Chapter $4$.  We will then state an optimal stopping/switching problem which models our quickest detection problem.  Its basic feature is that it involves many different filtrations, corresponding to the different ways in which one may observe the processes.  We show how this problem may be reduced to an impulse control/stopping problem with a single filtration and a single Brownian Motion, so that the tools of optimal stopping may be applied.  

	Let us describe this impulse control/stopping problem more closely, since here the exact structure of the problem is evident.  Consider a sequence of channels which all have prior probability $\hat{\pi}$ of satisfying $H_1$.  By observing one channel at the time, the posterior process of precisely one of these channels will evolve in time according to a stochastic differential equation, and all the rest will simply stay at $\hat{\pi}$.  The goal of the agent is essentially to find some channel and some time when the respective posterior process is very close to $1$, ensuring that $H_1$ is almost certainly satisfied.  Given this goal, the agent, when faced with a posterior which has dropped below $\hat{\pi}$, will want to immediately move on to the next channel.  The agent must take a finite time to react, so after some amount of time $\epsilon$, or more realistically, when the posterior process hits the level $\hat{\pi} - \epsilon$, he switches channels, and the effect of this is that the posterior process is reset to $\hat{\pi}$.  These are the impulse controls available to the agent: at any time, he can move ``his" posterior process back to $\hat{\pi}$.  Now, in the limit, the agent ideally wants to react as quickly as possible, which means $\epsilon \downarrow 0$.  In this limit, we show that these impulsed processes converge in an appropriate sense to a diffusion with reflecting boundary at $\hat{\pi}$, so that the original problem may be stated as an optimal stopping problem on this reflected process.

	Finally, we solve the optimal stopping problem using a standard verification theorem.  Using the results derived thus far, we outline $\epsilon$-optimal algorithms for the quickest search problem, and provide some computations of optimal threshold levels for different parameters.


\section{From Sequential Analysis to Optimal Stopping}

In this section we will describe how problems in continuous time sequential analysis can be formulated as optimal stopping problems, using the building block of the one channel problem.  This material is derived from Chapter $4$ of \cite{MR2374974}. 

Let $\left(\Omega, \mc{F}, P_{\hat{\pi}}\right)$ be a probability space supporting a Brownian Motion $B$ and an independent Bernoulli random variable $\theta$, with $P_{\hat{\pi}}(\theta = 1) = \hat{\pi}$.  We consider two statistical hypotheses

\[
H_1: \theta = 1 \mbox{ and } H_0: \theta = 0,
\]

\noindent which have respective prior probabilities $\hat{\pi}$ and $1 - \hat{\pi}$.  We let $\xi_t = \theta t + B_t$ model the observed process, with induced filtration $\mb{F}^\xi = \left(\mc{F}^\xi_t \right)_{t \geq 0}$.  We will find it useful to write $P_{\hat{\pi}} = \hat{\pi} P_1 + (1-\hat{\pi})P_0$, where under $P_1$, $\xi_t$ is a Brownian Motion with unit drift, and under $P_0$ it is a standard Brownian Motion with zero drift.

Based on the continuous observation of $\xi$, our goal is to choose a sequential decision rule $(\tau, d)$, where $\tau$ is a $\mb{F}^\xi$ stopping time, and $d$ is $\mc{F}^\xi_\tau$-measurable and equal to one or zero.  Taking $d=1$ models accepting $H_1$ at time $\tau$, and taking $d = 0$ models accepting $H_0$.  The goal is to minimize the risk function
\begin{equation}\label{introeq}
V(\pi) = \underset{(\tau, d)}{\inf} \ E_\pi \left[ \tau + a 1_{\{d = 0, \theta = 1\}} + b1_{\{d = 1, \theta = 0\}} \right],
\end{equation}
where $a$ and $b$ are used to weight the importance of each misidentification.  Let $\pi_t = P_{\hat{\pi}}(\theta = 1 | \F^\xi_t)$ be the posterior process, which models our belief that $H_1$ is satisfied, based on observations up to time $t$, $\mc{F}^\xi_t$.  Given a stopping time $\tau$, the choice of whether to set $d = 0$ or $d = 1$ is completely described by the value of $\pi_\tau$.  For $c = \frac{b}{a + b}$, if $\pi_\tau \leq c$, $d = 0$, and $d = 1$ otherwise.  Therefore, \eqref{introeq} may be restated as an optimal stopping problem on $\pi_t$:
\[
V(\pi) = \underset{\tau}{\inf} \ E_\pi \left[\tau + a\pi_\tau \wedge b(1 - \pi_\tau) \right].
\]

We have reduced the sequential analysis problem to one of optimal stopping, but it remains to understand the time dynamics of the posterior process $\pi_t$.  To that end, we introduce the odds process
\[
\Phi_t \triangleq \frac{dP_1}{dP_0} \Big|_{\mc{F}^\xi_t}(\omega).
\]
By an application of Bayes' rule, $\Phi_t$ can be calculated, and is equal to $\exp \left\{ \xi_t - \frac{t}{2} \right\}$. From \cite{MR2374974}, p. 181, the posterior process $\pi_t$ satisfies
\[
\pi_t = \hat{\pi}\frac{dP_1}{d[\hat{\pi}P_1 + (1 - \hat{\pi}) P_0]} \Big|_{\mc{F}^\xi_t}(\omega),
\]
so we can relate $\pi_t$ to $\Phi_t$.  Following the reasoning on p.181 of \cite{MR2374974}, we may derive




\[
d \pi_t = \pi_t(1 - \pi_t) dW_t, \pi_0 = \hat{\pi}.
\]

The optimal stopping problem on this diffusion will play a central role in our analysis.

\section{Reduction to a problem with a single filtration}

Let $\Omega^i$, $i \in \mb{N}$, denote $C[0,\infty)$.  We consider a sequence of independent Brownian Motions $W^1,W^2,\ldots$, defined canonically as the coordinate process on $\Omega^{\mb{N}} = \Omega^1 \times \Omega^2 \times \cdots$: for $(\omega^1,\omega^2,\ldots) \in \Omega^{\mb{N}}$, $W^i_t(\omega^1,\omega^2,\ldots) = \omega^i_t$.  Let $\mb{F}$ be the filtration generated by the canonical coordinate process.  Let $P^{\mb{N}}$ denote the product measure of Wiener measures on $\Omega^{\mb{N}}$.

We define, for each $i$, and for any random time $\phi$ independent of $W^i$, the process $\pi^{i,\phi}$ satisfying
\begin{equation}\label{pidef}
d\pi^{i,\phi}_t = \pi_t^{i,\phi} (1-\pi^{i,\phi}_t)dW^i_t, \mbox{ for } t \geq \phi, \pi^{i,\phi}_\phi = \hat{\pi}.
\end{equation}

The value $\pi^{i, \phi}_t$ represents the posterior probability, based on observing the history of Channel $i$ from time $\phi$ up until time $t$, that Channel $i$ satisfies $H_1$.  Although $\phi$ here is arbitrary, it will always be, for our purposes, a switching time between two channels.  We will take these stochastic differential equations as our main object of study.  Now, given that the single channel sequential analysis problem involves the optimal stopping of a single copy of the SDE, it stands to reason that a multi channel problem should involve a sequence of these processes which are concatenated together according to the way in which we observe each channel.  We describe this procedure now.

Let $\mb{F}^{(1)}$ be the filtration on $\Omega^{\mb{N}}$ generated by $W^1$, which coincides with the filtration generated by $\pi^1 \triangleq \pi^{1,0}$.  We let $\mc{T}^{(1)}$ be the set of $\mb{F}^{(1)}$-stopping times.    

Let $\mathfrak{S}$ be the set of admissible switching controls, which will determine the set of times when the currently observed channel is changed.  Elements of $\mathfrak{S}$ will consist of sequences of increasing random times $\{\phi_1,\phi_2,\ldots\}$ with $\phi_1 = 0$.  The time $\phi_i$, $i \geq 2$, may be interpreted as the time when observation of Channel $i-1$ stops and observation of Channel $i$ begins.  The main property that each $\phi_i$ should have is that it should be measurable with respect to the information gathered before it: our decision to switch should be based on what we have seen thus far.  In order to make this precise, we will define elements of $\mathfrak{S}$ inductively: given the first $n$ switching times $\phi_1,\phi_2,\ldots,\phi_n$, we will define an allowed $(n+1)^{st}$ switching time.  First, the base case.  We let $\phi_1 = 0$.

The first possible switching time $\phi_2$ is any strictly positive $\mb{F}^{(1)}$-stopping time.  So, given $\phi_2$, we define the process $\pi^{(2),\phi_2}$ as follows:
\[
\pi^{(2),\phi_2}_t = \pi^1_t 1_{\{t < \phi_2\}} + \pi^{2,\phi_2}_t 1_{\{t \geq \phi_2\}}.
\]

The process $\pi^{(2),\phi_2}$ generates a filtration $\mb{F}^{(2),\phi_2}$.  We let $\mc{T}^{(2),\phi_2}$ denote the set of $\mb{F}^{(2),\phi_2}$-stopping times.

Now, we define what the switching time $\phi_3$ may look like, given that $\phi_2$ has already been chosen.  Such a switching time is any $\phi_3 \in \mc{T}^{(2),\phi_2}$ such that $\phi_3 > \phi_2$.  Given $\phi_2$ and $\phi_3$, we define the process $\pi^{(3),\phi_2,\phi_3}$ as follows:
\[
\pi^{(3),\phi_2,\phi_3}_t = \pi^{(2),\phi_2}_t 1_{\{t < \phi_3\}} + \pi^{3,\phi_3}_t 1_{\{t \geq \phi_3\}}.
\]

The process $\pi^{(3),\phi_2,\phi_3}$ generates a filtration $\mb{F}^{(3),\phi_2,\phi_3}$, and $\mc{T}^{(3),\phi_2,\phi_3}$ denotes the set of $\mb{F}^{(3),\phi_2,\phi_3}$-stopping times.  Proceeding in this way, we define, for each $n \in \mb{N}$, $\pi^{(n),\phi_2,\ldots,\phi_n}, \mb{F}^{(n),\phi_2,\ldots,\phi_n}$, and $\mc{T}^{(n),\phi_2,\ldots,\phi_n}$.  These are, respectively, the posterior process, filtration, and stopping times which result from switching channels at times $\phi_2,\phi_3,\ldots,\phi_n$.

\begin{definition}\label{def1}  Let $\Phi = \{\phi_1,\phi_2,\ldots\}$ be a sequence of random times such that $\phi_i > \phi_{i-1}$ on the set $\{\phi_{i-1} < \infty\}$, and such that $\lim_i \phi_i = \infty$.  We say that $\Phi$ is an admissible switching control if $\phi_1 = 0$, $\phi_2 \in \mc{T}^{(1)}$, and for $n \geq 2$, $\phi_n \in \mc{T}^{(n-1),\phi_2,\ldots,\phi_{n-1}}$
\end{definition}

We denote by $\mathfrak{S}$ the set of all possible switching strategies $\Phi$.  Each $\Phi = \{\phi_1,\phi_2,\ldots\} \in \mathfrak{S}$ induces an observed posterior process $\pi^\Phi$, defined as follows:
\begin{equation}\label{piphidef}
\pi^\Phi_t = \pi^{(n),\phi_2,\ldots,\phi_n}_t \text{ on the set } \{\phi_n \leq t < \phi_{n+1}\}.
\end{equation}
Intuitively, in comparison with \eqref{pidef}, $\pi^\Phi_t$ represents the posterior probability that at time $t$, the channel currently being observed under the observation strategy $\Phi$ satisfies hypothesis $H_1$.  If the same channel was always observed, $\pi^\Phi$ would behave exactly like $\pi^{1,\phi = 0}$.  As it is, when the channel is switched, the effect on the posterior is a sudden jump back to the original level $\hat{\pi}$.

The process $\pi^\Phi$ induces the filtration $\mb{F}^\Phi$ along with $\mc{T}^\Phi$, the set of $\mb{F}^\Phi$-stopping times.  We define the value function as follows:
\begin{equation}\label{value}
V_{\hat{\pi}} \triangleq \underset{\Phi \in \mathfrak{S}}{\inf} \  V_{\hat{\pi}}^\Phi \triangleq \underset{\Phi \in \mathfrak{S}}{\inf} \ \underset{\tau \in \mc{T}^\Phi}{\inf} E[c \tau + (1-\pi^\Phi_\tau)].
\end{equation}

Note that in the value function \eqref{piphidef}, there is only a $(1 - \pi_t)$ term, instead of $\pi_t \wedge (1 - \pi_t)$.  This reflects the fact that instead of deciding whether a single channel satisfies $H_1$ or $H_0$, one is looking only for a channel which satisfies $H_1$.  Now, the $\Phi$ also induces a process $W^\Phi$, which is defined as follows on $\Omega^{\mb{N}}$:

\begin{equation}\label{wphi}
W^\Phi_t(\omega^1,\omega^2,\ldots) \triangleq \omega^1_t \text{ on the set } \{t < \phi_2\}, 
\end{equation}
and for $n \in \mb{N}$
\begin{equation}\label{wphi2}
W^\Phi_t(\omega^1,\omega^2,\ldots) \triangleq W^\Phi_{\phi_n-}(\omega^1,\omega^2,\ldots) + (\omega^n_t - \omega^n_{\phi_n}) \text{ on the set } \{\phi_n \leq t < \phi_{n+1}\}.
\end{equation}

We prove the following standard fact:

\begin{lemma}\label{lemma1}  For each $\Phi \in \mathfrak{S}$, $W^\Phi$ is a Brownian Motion.
\end{lemma}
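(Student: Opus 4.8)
The plan is to verify L\'evy's characterization: I would show that $W^\Phi$ is continuous and is a local martingale with quadratic variation $\langle W^\Phi\rangle_t = t$ with respect to the filtration $\mb{F}^\Phi$ it generates. Continuity is immediate from the construction in \eqref{wphi2}: at each switching instant $\phi_n$ the increment $\omega^n_{\phi_n}-\omega^n_{\phi_n}$ appended to the left limit $W^\Phi_{\phi_n-}$ vanishes, so no jump is created, and since $\lim_n \phi_n = \infty$ every $t$ lies in exactly one interval $[\phi_n,\phi_{n+1})$. I would also record that for each fixed $T$ the condition $\lim_n\phi_n=\infty$ forces only finitely many switches on $[0,T]$, so on any compact interval $W^\Phi$ is a finite concatenation of pieces of the $W^i$, which reduces everything to finitely many splicing steps.

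The heart of the argument is a splicing lemma that I would establish first: conditionally on $\mc{F}^\Phi_{\phi_n}$, the shifted increment process $(W^n_{\phi_n+s}-W^n_{\phi_n})_{s\ge 0}$ is a standard Brownian motion independent of $\mc{F}^\Phi_{\phi_n}$. The usual strong Markov property does not apply directly, because $\phi_n$ is \emph{not} a stopping time for the filtration of $W^n$; rather, by Definition \ref{def1} the times $\phi_2,\dots,\phi_n$ are built only from $W^1,\dots,W^{n-1}$, while under $P^{\mb{N}}$ the motion $W^n$ is independent of $(W^1,\dots,W^{n-1})$. I would make this precise by conditioning on the whole trajectory of $(W^1,\dots,W^{n-1})$: this freezes $\phi_n$ at a deterministic value, and by stationarity and independence of Brownian increments the conditional law of $(W^n_{\phi_n+s}-W^n_{\phi_n})_{s\ge 0}$ is Wiener measure regardless of the conditioning, so integrating out yields the claimed independence. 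This is the step I expect to be the main obstacle, since it requires handling the interplay between the channel-dependent filtration $\mb{F}^\Phi$ and the product structure of $P^{\mb{N}}$ with care, in particular checking that $\mc{F}^\Phi_{\phi_n}\subseteq \sigma(W^1,\dots,W^{n-1})$ up to null sets so that the conditioning genuinely decouples the future increments of $W^n$.

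With the splicing lemma in hand I would argue by induction on the switches in $[0,T]$. On $[0,\phi_2)$ the process equals $W^1$, a Brownian motion. Assuming $W^\Phi$ stopped at $\phi_n$ is a continuous $\mb{F}^\Phi$-martingale with $\langle W^\Phi\rangle_{t\wedge\phi_n}=t\wedge\phi_n$, the splicing lemma shows that appending $(W^n_{\phi_n+s}-W^n_{\phi_n})_{s\ge 0}$ on $[\phi_n,\phi_{n+1})$ propagates both properties past $\phi_{n+1}$: the appended piece is a Brownian motion independent of the past, so the martingale property is preserved across the switch and the quadratic variation continues to accumulate at unit rate. Letting $n\to\infty$ and using that the switches exhaust $[0,T]$ gives $\langle W^\Phi\rangle_t=t$ together with the local martingale property on all of $[0,\infty)$. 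L\'evy's characterization then shows $W^\Phi$ is a Brownian motion, and since a continuous process with independent stationary Gaussian increments is automatically Brownian with respect to its own filtration $\mb{F}^\Phi$, the conclusion follows.
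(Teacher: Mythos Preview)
Your proposal is correct in outline, but it takes a genuinely different and more laborious route than the paper. The paper does not prove any splicing lemma or analyze the conditional independence of $(W^n_{\phi_n+s}-W^n_{\phi_n})_{s\ge 0}$ from $\mc{F}^\Phi_{\phi_n}$. Instead it works entirely in the \emph{large} product filtration $\mb{F}$ generated by all the coordinate processes $W^1,W^2,\ldots$ on $\Omega^{\mb{N}}$. In that filtration every $W^i$ is already an $\mb{F}$-martingale and every $\phi_i$ is an $\mb{F}$-stopping time (since $\mb{F}^{(i-1),\phi_2,\ldots,\phi_{i-1}}\subseteq \mb{F}$), so one can simply write the telescoping representation
\[
W^\Phi_t=\sum_{i=1}^\infty\big(W^i_{t\wedge\phi_{i+1}}-W^i_{t\wedge\phi_i}\big)
\]
and apply optional sampling term by term to get the $\mb{F}$-martingale property; the quadratic variation $\langle W^\Phi\rangle_t=t$ and continuity are then read off directly, and L\'evy's characterization finishes. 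What the paper's approach buys is brevity: by enlarging the filtration it sidesteps entirely the delicate point you identify (that $\phi_n$ is not a stopping time for the filtration of $W^n$ alone), at the cost of proving the martingale property in a larger filtration than $\mb{F}^\Phi$. Your approach, by contrast, stays inside $\mb{F}^\Phi$ and makes the independence structure explicit; it is more informative about \emph{why} the splicing works, but the verification that $\mc{F}^\Phi_{\phi_n}\subseteq\sigma(W^1,\ldots,W^{n-1})$ and the conditioning argument you outline are real work that the paper avoids.
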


\begin{proof} We first prove that $W^\Phi$ is a martingale.  Note that $W^\Phi_t$ can be written as
\[
W^\Phi_t = \sum_{i=1}^\infty (W^i_{t \wedge \phi_{i+1}} - W^i_{t \wedge \phi_i}).
\]
Additionally, 
\[
E[(W^\Phi_t)^2] = \sum_{i=1}^\infty E[(W^i_{t \wedge \phi_{i+1}} - W^i_{t \wedge \phi_i})^2] = \sum_{i=1}^\infty (E[t \wedge \phi_{i+1}] - E[t \wedge \phi_i]) = t,
\]
since $\lim_{\phi_i} = \infty$.  Therefore, by the Dominated Convergence Theorem,
\[
\begin{split}
E[W^\Phi_t | \mc{F}_s] 
&= E \left[ \sum_{i=1}^\infty (W^i_{t \wedge \phi_{i+1}} - W^i_{t \wedge \phi_i}) | \mc{F}_s \right]
\\&= \sum_{i=1}^\infty (W^i_{s \wedge \phi_{i+1}} - W^i_{s \wedge \phi_i})
\\& = W^\Phi_s,
\end{split}
\]
the second inequality following from Optional Sampling and the fact that each $W^i$ is a martingale.  Since $\langle W^i \rangle_t = t$ for all $t$, a.s., it follows that $\langle W^\Phi \rangle_t = t$ for all $t$, a.s.  It is also clear by construction that $W^\Phi$ has continuous paths.  Thus by Levy's characterization of Brownian Motion, $W^\Phi$ is a Brownian Motion for each $\Phi \in \mathfrak{S}$.
\end{proof}

The process $\pi^\Phi$ has continuous paths, with the exception of jump times at $\phi_2,\phi_3,\ldots$.  

\begin{lemma}\label{lemma2} $\pi^\Phi_t = \hat{\pi} + \int_0^t \pi^\Phi_s (1-\pi^\Phi_s) dW^\Phi_s + \sum_{i=1}^\infty (\hat{\pi} - \pi^\Phi_{\phi_i-}) 1_{\{t \geq \phi_i\}}$
\end{lemma}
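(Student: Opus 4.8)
The plan is to verify the identity path-by-path, working on each stochastic interval $[\phi_n,\phi_{n+1})$ between consecutive switches and then telescoping. Recall from \eqref{piphidef} that on $[\phi_n,\phi_{n+1})$ the observed posterior $\pi^\Phi$ coincides with $\pi^{n,\phi_n}$, which by \eqref{pidef} solves $d\pi^{n,\phi_n}_t = \pi^{n,\phi_n}_t(1-\pi^{n,\phi_n}_t)\,dW^n_t$ started from $\pi^{n,\phi_n}_{\phi_n}=\hat\pi$. The construction of $W^\Phi$ in \eqref{wphi}--\eqref{wphi2} is arranged precisely so that $W^\Phi - W^\Phi_{\phi_n}$ agrees with $W^n - W^n_{\phi_n}$ on this interval. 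Hence the first step is to argue that integrating the integrand $\pi^\Phi_s(1-\pi^\Phi_s)$ against $dW^\Phi$ over $[\phi_n,\phi_{n+1})$ produces the same result as integrating against $dW^n$. Using the representation $W^\Phi_t = \sum_i (W^i_{t\wedge\phi_{i+1}} - W^i_{t\wedge\phi_i})$ established in the proof of Lemma \ref{lemma1}, this reduces to the standard localization identity $\int_0^t H_s\,dW^\Phi_s = \sum_i \int_{t\wedge\phi_i}^{t\wedge\phi_{i+1}} H_s\,dW^i_s$ for a predictable integrand $H$; note that $\pi^\Phi_s(1-\pi^\Phi_s)$ is $\mb{F}^\Phi$-adapted and, having only the countably many jumps at the $\phi_i$, agrees $ds$-a.e. with its left-continuous (predictable) version, so the integral is well posed.

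Granting this, the second step computes the contribution of one completed interval. Since the continuous piece $\pi^{n,\phi_n}$ leaves $\hat\pi$ at $\phi_n$ and runs up to $\phi_{n+1}$, one obtains $\int_{\phi_n}^{\phi_{n+1}} \pi^\Phi_s(1-\pi^\Phi_s)\,dW^\Phi_s = \pi^{n,\phi_n}_{\phi_{n+1}} - \hat\pi = \pi^\Phi_{\phi_{n+1}-} - \hat\pi$, the last equality holding because $\pi^{n,\phi_n}$ is continuous and equals $\pi^\Phi$ on $[\phi_n,\phi_{n+1})$, so its terminal value is the left limit of $\pi^\Phi$ at the next switch.

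The third step assembles the pieces. Fix $t$ and let $N$ be the index with $t \in [\phi_N,\phi_{N+1})$; since $\lim_i \phi_i = \infty$, only finitely many switches precede $t$, so no convergence issue arises and we may split $\int_0^t = \sum_{k=1}^{N-1}\int_{\phi_k}^{\phi_{k+1}} + \int_{\phi_N}^t$. The telescoping sum over completed intervals gives $\sum_{k=2}^{N}(\pi^\Phi_{\phi_k-} - \hat\pi)$, while the final partial interval contributes $\pi^\Phi_t - \hat\pi$. Adding $\hat\pi$ together with the jump sum $\sum_{i\geq 1}(\hat\pi - \pi^\Phi_{\phi_i-})\mathbf{1}_{\{t\geq\phi_i\}}$, which for $t \in [\phi_N,\phi_{N+1})$ equals $-\sum_{i=2}^{N}(\pi^\Phi_{\phi_i-}-\hat\pi)$, the telescoped continuous contribution and the jump sum cancel, leaving exactly $\pi^\Phi_t$. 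Here I adopt the convention $\pi^\Phi_{0-}=\hat\pi$, so the $i=1$ term (attached to $\phi_1=0$) vanishes, consistent with the absence of a jump at the start.

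The main obstacle is the first step: making rigorous that a single stochastic integral against the concatenated Brownian motion $W^\Phi$ correctly reproduces, on each interval, the integral against the currently active driver $W^n$. Once this bookkeeping -- predictability of the integrand and localization of the stochastic integral to the stochastic intervals $(\phi_i,\phi_{i+1}]$ -- is in place, the remaining content is the elementary telescoping computation of the second and third steps.
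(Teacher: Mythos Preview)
Your proposal is correct and follows essentially the same approach as the paper: both identify the continuous dynamics on each $[\phi_n,\phi_{n+1})$, invoke locality of the stochastic integral to replace $dW^n$ by $dW^\Phi$ on that interval (the paper cites the corresponding corollary in Protter), and then telescope. The only cosmetic difference is that the paper organizes the telescoping as a backward recursion reducing the index $n$ step by step, whereas you sum forward over completed intervals; the content is identical.
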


\begin{proof}  By \eqref{piphidef} and \eqref{pidef}, on $\{\phi_n \leq t < \phi_{n+1} \}$, $\pi^\Phi_t = \pi^\Phi_{\phi_n} + \int_{\phi_n}^t \pi^\Phi_s \left(1 - \pi^\Phi_s \right) dW^n_s$.  Furthermore, on $\{\phi_n \leq t < \phi_{n+1}\}$, $W^\Phi_t - W^\Phi_{\phi_n} = W^n_t - W^n_{\phi_n}$ by construction of $W^\Phi$.  Using the locality of stochastic integration (see \cite{MR2273672}, the Corollary on p. $62$), this implies that $\pi^\Phi_t = \pi^\Phi_{\phi_n} + \int_{\phi_n}^t \pi^\Phi_s (1-\pi^\Phi_s) dW^\Phi_s$.  In particular, $\pi^\Phi_{\phi_{n+1}-} = \pi^\Phi_{\phi_n} + \int_{\phi_n}^{\phi_{n+1}} \pi^\Phi_s (1-\pi^\Phi_s) dW^\Phi_s$.  Finally, $\Delta \pi^\Phi_{\phi_n} \triangleq \pi^\Phi_{\phi_n} - \pi^\Phi_{\phi_n-} = \hat{\pi} - \pi^\Phi_{\phi_n-}$.  Then, on $\{\phi_n \leq t < \phi_{n+1}\}$,
\[
\begin{split}
\pi^\Phi_t
&= \pi^\Phi_{\phi_n} + \int_{\phi_n}^t \pi^\Phi_s (1-\pi^\Phi_s) dW^\Phi_s
\\&= \pi^\Phi_{\phi_n-} + \Delta \pi^\Phi_{\phi_n} + \int_{\phi_n}^t \pi^\Phi_s (1-\pi^\Phi_s) dW^\Phi_s
\\&=\pi^\Phi_{\phi_{n-1}} + \int_{\phi_{n-1}}^{\phi_n} \pi^\Phi_s (1-\pi^\Phi_s) dW^\Phi_s + \pi^\Phi_{\phi_n} - \pi^\Phi_{\phi_n-} + \int_{\phi_n}^t \pi^\Phi_s (1-\pi^\Phi_s) dW^\Phi_s
\\&= \pi^\Phi_{\phi_{n-1}} + \int_{\phi_{n-1}}^t \pi^\Phi_s (1-\pi^\Phi_s) dW^\Phi_s + \hat{\pi} - \pi^\Phi_{\phi_n-}
\end{split}
\]
Now, if we repeated apply this procedure, reducing the index $n$ by one each time, we obtain
\[
\pi^\Phi_0 + \int_0^t \pi^\Phi_s (1-\pi^\Phi_s) dW^\Phi_s + \sum_{i=1}^n (\hat{\pi} - \pi^\Phi_{\phi_i-}) = \hat{\pi} + \int_0^t \pi^\Phi_s (1-\pi^\Phi_s) dW^\Phi_s + \sum_{i=1}^n (\hat{\pi} - \pi^\Phi_{\phi_i-}).
\]
\end{proof}

Let $\overline{\Omega}$ be another copy of the canonical space $C[0,\infty)$ with coordinate process $\overline{W}_t$ and filtration $\overline{\mb{F}}$ generated by $\overline{W}$.  Let $\overline{P}$ denote Wiener measure on this space.  Also, let $\overline{\mc{T}}$ denote the set of $\overline{\mb{F}}$-stopping times.  We would like to reduce the original problem $V_{\hat{\pi}}$ to one where everything uses the \emph{same} Brownian Motion ($\overline{W})$ and \emph{same} filtration ($\overline{\mb{F}}$).

\begin{lemma}\label{lemma3}  Let $\Phi \in \mathfrak{S}$.  For any $\mb{F}^\Phi$-stopping time $\tau$, there exists a $\overline{\tau} \in \overline{\mc{T}}$ such that $W^\Phi_{\cdot \wedge \tau}$ and $\overline{W}_{\cdot \wedge \overline{\tau}}$ are identically distributed as processes.  Conversely, for any  $\overline{\tau} \in \overline{\mc{T}}$, there exists a $\mb{F}^\Phi$-stopping time $\tau$ such that $W^\Phi_{\cdot \wedge \tau}$ and $\overline{W}_{\cdot \wedge \overline{\tau}}$ are identically distributed as processes.
\end{lemma}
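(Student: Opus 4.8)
The plan is to prove that the filtration $\mb{F}^\Phi$ generated by the observed posterior $\pi^\Phi$ agrees, up to $P^{\mb{N}}$-null sets, with the filtration $\mb{F}^{W^\Phi}$ generated by the Brownian Motion $W^\Phi$ of Lemma~\ref{lemma1}, and then to transport stopping times between the two Wiener spaces. Granting the equality $\mb{F}^\Phi = \mb{F}^{W^\Phi}$, a stopping time $\tau \in \mc{T}^\Phi$ is an $\mb{F}^{W^\Phi}$-stopping time; and since $W^\Phi$ is a measurable map from $\Omega^{\mb{N}}$ into $C[0,\infty)$ with $\mb{F}^{W^\Phi}_t = (W^\Phi)^{-1}(\mc{B}_t)$, where $\mc{B}_t$ denotes the canonical filtration of $C[0,\infty)$ up to time $t$, a standard representation (Galmarino's test) yields a functional $T \colon C[0,\infty) \to [0,\infty]$ with $\{T \leq t\} \in \mc{B}_t$ and $\tau = T(W^\Phi)$. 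Because $\overline{W}$ is the coordinate process on $\overline{\Omega} = C[0,\infty)$, the choice $\overline{\tau} \treq T(\overline{W})$ lies in $\overline{\mc{T}}$; and as $W^\Phi$ is a Brownian Motion its law under $P^{\mb{N}}$ is Wiener measure $\overline{P}$, so the fixed measurable map $w \mapsto w_{\cdot \wedge T(w)}$ pushes both laws forward to the same measure, giving that $W^\Phi_{\cdot \wedge \tau}$ and $\overline{W}_{\cdot \wedge \overline{\tau}}$ are identically distributed. The converse is entirely symmetric: an $\overline{\tau} \in \overline{\mc{T}}$ is already of the form $\overline{T}(\overline{W})$ with $\{\overline{T} \leq t\} \in \mc{B}_t$, and $\tau \treq \overline{T}(W^\Phi)$ is then an $\mb{F}^{W^\Phi} = \mb{F}^\Phi$-stopping time carrying the same distributional identity.

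Everything therefore reduces to the equality $\mb{F}^\Phi = \mb{F}^{W^\Phi}$. The inclusion $\mb{F}^{W^\Phi} \subseteq \mb{F}^\Phi$ is routine. By Lemma~\ref{lemma2} the semimartingale $\pi^\Phi$ has continuous martingale part $M_t = \int_0^t \pi^\Phi_s(1-\pi^\Phi_s)\,dW^\Phi_s$ and pure-jump part $\sum_i (\hat{\pi} - \pi^\Phi_{\phi_i-}) 1_{\{t \geq \phi_i\}}$; both the jump sizes and their locations are visible in the path of $\pi^\Phi$, so $M$ is recovered $\mb{F}^\Phi$-adaptedly by subtraction. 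Since $\pi^\Phi$, started at $\hat{\pi} \in (0,1)$ and reset to $\hat{\pi}$ at each switch, never reaches $0$ or $1$ in finite time, one has $\pi^\Phi(1-\pi^\Phi) > 0$ a.s., whence $W^\Phi = \int_0^\cdot [\pi^\Phi_s(1-\pi^\Phi_s)]^{-1}\,dM_s$ is $\mb{F}^\Phi$-adapted and $\mb{F}^{W^\Phi} \subseteq \mb{F}^\Phi$.

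The main obstacle is the reverse inclusion $\mb{F}^\Phi \subseteq \mb{F}^{W^\Phi}$, namely that $\pi^\Phi$ — and hence every switching time $\phi_n$ — is a functional of the single Brownian path $W^\Phi$. I would establish this by induction on $n$, the delicate point being that $W^\Phi$ and the $n$-th driving motion $W^n$ coincide only on $[\phi_n, \phi_{n+1})$. The base case is immediate: on $[0,\phi_2)$ one has $W^\Phi = W^1$, so $\pi^{(1)} = \pi^1$ is $\mb{F}^{W^\Phi}$-adapted there. Assume that $\pi^{(n),\phi_2,\ldots,\phi_n}$ restricted to $[0,t]$ is a measurable functional of $W^\Phi$ restricted to $[0,t]$ for every $t \leq \phi_{n+1}$. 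Since $\phi_{n+1} \in \mc{T}^{(n),\phi_2,\ldots,\phi_n}$ is a stopping time of the filtration generated by $\pi^{(n),\phi_2,\ldots,\phi_n}$, Galmarino's test expresses $\{\phi_{n+1} \leq t\}$ through the path of $\pi^{(n),\phi_2,\ldots,\phi_n}$ on $[0, t \wedge \phi_{n+1}]$ alone; composing with the inductive representation of that path in terms of $W^\Phi|_{[0, t \wedge \phi_{n+1}]}$ shows that $\phi_{n+1}$ is itself an $\mb{F}^{W^\Phi}$-stopping time. Finally, on $[\phi_{n+1}, \phi_{n+2})$ the identity $W^\Phi_t - W^\Phi_{\phi_{n+1}} = W^{n+1}_t - W^{n+1}_{\phi_{n+1}}$ together with the defining equation~\eqref{pidef} determines $\pi^{n+1,\phi_{n+1}}$, and hence $\pi^{(n+1),\ldots}$, as a functional of $W^\Phi$ on that interval. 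This closes the induction, giving $\pi^\Phi$ adapted to $\mb{F}^{W^\Phi}$ and thus the desired filtration equality.
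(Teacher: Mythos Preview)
Your approach is essentially the paper's: both transport stopping times along the measurable map $W^\Phi \colon \Omega^{\mb{N}} \to \overline{\Omega}$, using that $W^\Phi$ pushes $P^{\mb{N}}$ forward to Wiener measure. The paper defines $\overline{\tau}$ directly on fibres $(W^\Phi)^{-1}(\overline{\omega})$, while you phrase the same construction through Galmarino's test and a functional $T$; these are the same argument.

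Where you differ is in rigour. The paper's proof silently identifies $\mb{F}^\Phi$ (defined as the filtration generated by $\pi^\Phi$) with the filtration generated by $W^\Phi$: it asserts that ``$\tau$ is constant on $(W^\Phi)^{-1}(\overline{\omega})$'' and that ``$W^\Phi$ takes $\mc{F}^\Phi_t$-measurable sets into $\overline{\mc{F}}_t$-measurable sets'', both of which require $\mb{F}^\Phi \subseteq \mb{F}^{W^\Phi}$, and for the converse it uses $(W^\Phi)^{-1}(\overline{\mc{F}}_t) \subseteq \mc{F}^\Phi_t$, which is the reverse inclusion. Neither inclusion is argued. You make this explicit and supply both: recovering $W^\Phi$ from $\pi^\Phi$ by inverting the stochastic integral (giving $\mb{F}^{W^\Phi} \subseteq \mb{F}^\Phi$), and an induction on the switching index showing each $\phi_n$ and each segment of $\pi^\Phi$ is a functional of $W^\Phi$ (giving $\mb{F}^\Phi \subseteq \mb{F}^{W^\Phi}$). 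Your inductive step is correct in outline; the only care needed is that the hypothesis ``$\pi^{(n)}|_{[0,t]}$ is a functional of $W^\Phi|_{[0,t]}$ for $t \le \phi_{n+1}$'' should be read as a statement on the event $\{t \le \phi_{n+1}\}$, which is exactly what Galmarino's test needs to conclude $\{\phi_{n+1} \le t\} \in \mc{F}^{W^\Phi}_t$. In short, your proof follows the paper's route but closes a gap the paper leaves open.
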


\begin{proof}  Let $\Phi \in \mathfrak{S}$, and let $\tau \in \mc{T}^\Phi$.  We have a mapping $W^\Phi: \Omega^\mb{N} \ra \overline{\Omega}$ which is defined according to \eqref{wphi} and \eqref{wphi2}.  Since $\tau \in \mc{T}^\Phi$, it is in particular measurable with respect to the filtration $\mb{F}^\Phi$ generated by $W^\Phi$ on $\Omega^{\mb{N}}$.  This implies that for any $\overline{\omega} \in \overline{\Omega}$, $\tau$ is constant on $(W^\Phi)^{-1}(\overline{\omega})$.  

Thus, we define $\overline{\tau}:\overline{\Omega} \ra \mb{R}$ as follows.  For $\overline{\omega} \in \overline{\Omega}$, choose any $\omega \in (W^\Phi)^{-1}(\overline{\omega})$, and set $\overline{\tau}(\overline{\omega}) \triangleq \tau(\omega)$.  From the discussion in the above paragraph, this definition is well-defined.  

We claim that $\overline{\tau}$ is an $\overline{\mb{F}}$-stopping time.  For $t \in \mb{R}$, we have
\[
\begin{split}
\{\overline{\omega} : \overline{\tau}(\overline{\omega}) \leq t \}
&= \{ \overline{\omega} : \tau(\omega) \leq t \text{ for } \omega \in (W^\Phi)^{-1}(\overline{\omega}) \}
\\&= W^\Phi \left( \{ \omega : \tau(\omega) \leq t \} \right).
\end{split}
\]
Since $\tau \in \mc{T}^\Phi$, the set $\{ \omega : \tau(\omega) \leq t \} \in \mc{F}^\Phi_t$.  By construction, the mapping $W^\Phi : \Omega^\mb{N} \ra \overline{\Omega}$ takes $\mc{F}^\Phi_t$-measurable sets into $\overline{\mc{F}}_t$-measurable sets.  Therefore, $\overline{\tau} \in \overline{\mc{T}}$.

Next, we claim that $W^\Phi_{\cdot \wedge \tau}$ and $\overline{W}_{\cdot \wedge \overline{\tau}}$ are distributed identically as processes.  As before, this is essentially a tautology.  Let $A \in \overline{\mc{F}}_\infty$.

We have
\[
\begin{split}
\{\overline{\omega} : \overline{W}_{\cdot \wedge \overline{\tau}(\overline{\omega})}(\overline{\omega}) \in A \}
&= \{ \overline{\omega} : W^\Phi_{\cdot \wedge \tau(\omega)}(\omega) \in A \text{ for } \omega \in (W^\Phi)^{-1}(\overline{\omega}) \}
\\&= W^\Phi \left( \{ \omega : W^\Phi_{\cdot \wedge \tau(\omega)}(\omega) \in A \} \right).
\end{split}
\]

Thus, $\overline{P}(\{\overline{\omega} : \overline{W}_{\cdot \wedge \overline{\tau}(\overline{\omega})} \in A \}) = \overline{P} \left(W^\Phi \left( \{ \omega : W^\Phi_{\cdot \wedge \tau(\omega)}(\omega) \in A \} \right) \right)$.  Since $W^\Phi$ is a Brownian Motion, the measure $P^\Phi$ which $W^\Phi$ induces on $\overline{\Omega}$ agrees with $\overline{P}$.  Thus, $\overline{P}(\{\overline{\omega} : \overline{W}_{\cdot \wedge \overline{\tau}(\overline{\omega})} \in A \}) = P^\Phi \left( W^\Phi \left( \{ \omega : W^\Phi_{\cdot \wedge \tau(\omega)}(\omega) \in A \} \right) \right) = P^{\mb{N}} \left(\{ \omega : W^\Phi_{\cdot \wedge \tau(\omega)}(\omega) \in A \} \right)$.  Thus, $\overline{W}_{\cdot \wedge \overline{\tau}}$ and $W^\Phi_{\cdot \wedge \tau}$ are identically distributed.

Conversely, suppose that $\overline{\tau}$ is an $\overline{\mb{F}}$-stopping time.  Define $\tau:\Omega^{\mb{N}} \ra \mb{R}$ by $\tau = \overline{\tau} \circ W^\Phi$.  We claim that $\tau$ is a stopping time.  Let $t \in \mb{R}$.  Then
\[
\begin{split}
\{ \omega : \tau(\omega) \leq t \}
&= \{\omega : \overline{\tau}(W^\Phi(\omega)) \leq t \}
\\&= (W^\Phi)^{-1} \left( \{ \overline{\omega} : \overline{\tau}(\overline{\omega}) \leq t \} \right).
\end{split}
\]

Since $\overline{\tau}$ is an $\overline{\mb{F}}$-stopping time, the set $\{ \overline{\omega} : \overline{\tau}(\overline{\omega}) \leq t \} \in \overline{\mc{F}}_t$, and $(W^\Phi)^{-1} \left( \{ \overline{\omega} : \overline{\tau}(\overline{\omega}) \leq t \} \right) \in \mc{F}^\Phi_t$.  So, $\tau \in \mc{T}^\Phi$.  

Now we claim that $W^\Phi_{\cdot \wedge \tau}$ and $\overline{W}_{\cdot \wedge \overline{\tau}}$ are identically distributed as processes.  Let $A \in \overline{\mc{F}}^\infty$.  
\[
\begin{split}
\{ \omega : W^\Phi_{\cdot \wedge \tau(\omega)} \in A\}
& = \{ \omega : \overline{W}_{\cdot \wedge \overline{\tau}(\overline{\omega})}(\overline{\omega}) \in A \text{ for } \overline{\omega} \text{ such that } W^\Phi(\omega) = \overline{\omega} \}
\\& = (W^\Phi)^{-1} \left( \{ \overline{\omega} : \overline{W}_{\cdot \wedge \overline{\tau}(\overline{\omega})}(\overline{\omega}) \in A \} \right).
\end{split}
\]

As before, 
\[
\begin{split}P^{\mb{N}} \left( \{ \omega : W^\Phi_{\cdot \wedge \tau(\omega)} \in A\} \right) 
&= P^{\mb{N}} \left( (W^\Phi)^{-1} \left( \{ \overline{\omega} : \overline{W}_{\cdot \wedge \overline{\tau}(\overline{\omega})}(\overline{\omega}) \in A \} \right) \right) 
\\&= P^\Phi \left( \{ \overline{\omega} : \overline{W}_{\cdot \wedge \overline{\tau}(\overline{\omega})}(\overline{\omega}) \in A \} \right) 
\\&= \overline{P} \left( \{ \overline{\omega} : \overline{W}_{\cdot \wedge \overline{\tau}(\overline{\omega})}(\overline{\omega}) \in A \} \right),
\end{split}
\]
and so the processes are identically distributed.
\end{proof}

\begin{lemma}\label{lemma4} For each $\Phi \in \mathfrak{S}$, there exists a sequence of $\overline{\mb{F}}$-stopping times $\overline{\Phi} = \{\overline{\phi_1} = 0,\overline{\phi_2},\ldots \}$ such that for 
\begin{equation}\label{piline}
\overline{\pi}^{\overline{\Phi}}_t \triangleq \hat{\pi} + \int_0^t \overline{\pi}^{\overline{\Phi}}_s (1-\overline{\pi}^{\overline{\Phi}}_s) d\overline{W}_s + \sum_{i=1}^\infty (\hat{\pi} - \overline{\pi}^{\overline{\Phi}}_{\overline{\phi}_i-}) 1_{\{t \geq \overline{\phi_i}\}},
\end{equation}
$\overline{\pi}^{\overline{\Phi}}$ is identically distributed with $\pi^\Phi$.
\end{lemma}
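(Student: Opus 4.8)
The plan is to exhibit $\overline{\pi}^{\overline{\Phi}}$ as the image of $\pi^\Phi$ under the canonical map $W^\Phi : \Omega^{\mb{N}} \ra \overline{\Omega}$ used in Lemma \ref{lemma3}, and to read off the $\overline{\phi}_i$ as the images of the switching times $\phi_i$. The conceptual point is that, by Lemma \ref{lemma2}, $\pi^\Phi$ solves an SDE driven by the Brownian Motion $W^\Phi$ with deterministic resets to $\hat{\pi}$ at the times $\phi_i$, and each $\phi_i$ is itself an $\mb{F}^\Phi$-stopping time; hence $\pi^\Phi$ is a measurable functional of $W^\Phi$ alone. Since $W^\Phi$ is a standard Brownian Motion (Lemma \ref{lemma1}) whose induced law on $\overline{\Omega}$ is Wiener measure $\overline{P}$ (as established in the proof of Lemma \ref{lemma3}), applying the same functional to $\overline{W}$ must reproduce the law of $\pi^\Phi$.

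First I would make the switching times transportable. Each $\phi_i$ belongs to $\mc{T}^\Phi$: indeed $\pi^\Phi$ coincides with $\pi^{(i-1),\phi_2,\ldots,\phi_{i-1}}$ on $\{t<\phi_i\}$, so the filtrations $\mb{F}^{(i-1),\phi_2,\ldots,\phi_{i-1}}$ and $\mb{F}^\Phi$ agree up to $\phi_i$, and $\phi_i\in\mc{T}^{(i-1),\phi_2,\ldots,\phi_{i-1}}$ is therefore an $\mb{F}^\Phi$-stopping time. By the fiber-constancy argument in the proof of Lemma \ref{lemma3}, each such $\phi_i$ is constant on the fibers $(W^\Phi)^{-1}(\overline{\omega})$, so setting $\overline{\phi}_i(\overline{\omega})\triangleq\phi_i(\omega)$ for any $\omega\in(W^\Phi)^{-1}(\overline{\omega})$ is well defined and yields an $\overline{\mb{F}}$-stopping time $\overline{\phi}_i$; the requirements $\overline{\phi}_i>\overline{\phi}_{i-1}$ and $\overline{\phi}_i\to\infty$ pass through the map since they hold $P^{\mb{N}}$-a.s. and $P^\Phi=\overline{P}$. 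This produces the candidate sequence $\overline{\Phi}=\{\overline{\phi}_1=0,\overline{\phi}_2,\ldots\}$.

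Next I would define $\overline{\pi}^{\overline{\Phi}}$ as the strong solution of \eqref{piline} on $\overline{\Omega}$ with these resets, and identify it as the same functional of the driving path that produces $\pi^\Phi$. Between consecutive reset times the equation is $d\pi=\pi(1-\pi)\,dW$, whose coefficient is Lipschitz on $[0,1]$, so on each interval $[\overline{\phi}_i,\overline{\phi}_{i+1})$ there is a unique strong solution started from $\hat{\pi}$, and concatenating these yields a pathwise-unique strong solution $\overline{\pi}^{\overline{\Phi}}=H(\overline{W})$ for a measurable map $H$ determined by the solution operator together with the reset times. Because $\overline{\phi}_i=f_i(\overline{W})$ and $\phi_i=f_i(W^\Phi)$ for the common functionals $f_i$ constructed above, the same $H$ satisfies $\pi^\Phi=H(W^\Phi)$ by Lemma \ref{lemma2} and pathwise uniqueness. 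Since $W^\Phi$ and $\overline{W}$ are identically distributed, I conclude $\pi^\Phi=H(W^\Phi)\stackrel{d}{=}H(\overline{W})=\overline{\pi}^{\overline{\Phi}}$, which is the assertion.

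The step I expect to be the main obstacle is the rigorous identification of $\pi^\Phi$ as a measurable functional $H$ of $W^\Phi$ \emph{alone}, rather than of the full product-space data $(W^1,W^2,\ldots)$. This requires knowing that the resets occur at times that are themselves functionals of $W^\Phi$ (handled by the fiber-constancy above) and that the reset SDE has a pathwise-unique strong solution, so that no extra randomness enters between jumps; care is also needed at the accumulation of the infinitely many switching times to ensure $H$ is globally well defined in $t$, which uses $\overline{\phi}_i\to\infty$. Once $H$ is in hand, the equality in law is immediate from Lemma \ref{lemma1} and the pushforward identity $P^\Phi=\overline{P}$ of Lemma \ref{lemma3}.
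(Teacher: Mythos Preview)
Your argument is correct and shares its starting point with the paper's: both obtain the $\overline{\phi}_i$ as images of the $\phi_i$ under the map $W^\Phi:\Omega^{\mb{N}}\to\overline{\Omega}$, via the fiber-constancy mechanism of Lemma~\ref{lemma3}. The difference lies in how the equality in law of $\pi^\Phi$ and $\overline{\pi}^{\overline{\Phi}}$ is then concluded. The paper packages the switching times into a simple point process $N^\Phi$ (respectively $\overline{N}$), observes that the pairs $(W^\Phi,N^\Phi)$ and $(\overline{W},\overline{N})$ are identically distributed, writes both $\pi$-processes as solutions of the jump SDE $d\pi = f(\pi)\,dW + g(\pi_{-})\,dN$ with $f(x)=x(1-x)$ and $g(x)=\hat{\pi}-x$, and invokes uniqueness in law for such SDEs (Theorem~9.1 of Ikeda--Watanabe). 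You instead argue pathwise uniqueness on each inter-reset interval, concatenate to obtain a measurable solution map $H$ with $\pi^\Phi=H(W^\Phi)$ and $\overline{\pi}^{\overline{\Phi}}=H(\overline{W})$, and push forward via $W^\Phi\stackrel{d}{=}\overline{W}$. Your route is more self-contained---no external jump-SDE uniqueness result is needed---and makes explicit the conceptual point that $\pi^\Phi$ depends on the product-space data only through $W^\Phi$; the paper's route is terser and avoids having to check that the concatenated solution map $H$ is globally well defined and measurable. Your care with the issues of $\phi_i\in\mc{T}^\Phi$ and $\overline{\phi}_i\to\infty$ is appropriate; the paper simply asserts the former and the latter is inherited from $\Phi$.
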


\begin{proof}

Let $N^\Phi$ be the simple point process on $\Omega^{\mb{N}}$ which jumps at the $\mb{F}^\Phi$-stopping times $\phi_1,\phi_2,\ldots$.  Let $\overline{\Phi} = \{\overline{\phi}_1,\overline{\phi}_2,\ldots\}$ be a sequence of $\overline{\mb{F}}$-stopping times whose existence is guaranteed by Lemma \ref{lemma3}, and let $\overline{N}$ be the simple point process on $\overline{\Omega}$ which jumps at the $\overline{\mb{F}}$-stopping times $\overline{\phi}_1,\overline{\phi}_2,\ldots$.  According to Lemma \ref{lemma3}, $(W^\Phi,N^\Phi)$ and $(\overline{W},\overline{N})$ are identically distributed as processes.  Let $f(x) = x(1-x)$ and let $g(x) = \hat{\pi} - x$.  Then $\pi^\Phi$ and $\overline{\pi}^{\overline{\Phi}}$ satisfy the SDE's
\[
d \pi^\Phi_t = f(\pi^\Phi_t)d W^\Phi_t + g(\pi^\Phi_{t-}) d N^\Phi_t,
\]
and
\[
d \overline{\pi}^{\overline{\Phi}}_t = f(\overline{\pi}^{\overline{\Phi}}_t)d \overline{W}_t + g(\overline{\pi}^{\overline{\Phi}}_{t-})d \overline{N}_t.
\]

Note that if $\pi^\Phi$ starts inside the interval $(0,1)$, then it stays there for all time, and similarly for $\overline{\pi}^{\overline{\Phi}}$.  On the interval $(0,1)$, $f(x)$ is bounded and Lipschitz, and the same goes for $g(x)$.  By Theorem $9.1$ of \cite{MR1011252} (see p. $245-6$), the above SDE's have uniqueness in law.  Consequently, $\pi^\Phi$ and $\overline{\pi}^{\overline{\Phi}}$ are identically distributed.  
\end{proof}

The converse is proven similarly using Lemma \ref{lemma3}.

\begin{lemma}\label{lemma5}  Let $\overline{\Phi} = \{\overline{\phi}_1,\overline{\phi}_2,\ldots\}$ be a collection of $\overline{\mb{F}}$-stopping times which increase to infinity, and let $\overline{\Phi}$ induce $\overline{\pi}^{\overline{\Phi}}$ as in (\ref{piline}).  Then there exists $\Phi \in \mathfrak{S}$ such that $\pi^\Phi$ is identically distributed to $\overline{\pi}^{\overline{\Phi}}$.
\end{lemma}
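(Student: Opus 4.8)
The plan is to prove this as the converse to Lemma \ref{lemma4}: rather than pushing a product-space control forward to the canonical space, I would pull the canonical stopping times $\overline{\phi}_1,\overline{\phi}_2,\ldots$ back to the product space $\Omega^{\mb{N}}$, one at a time, using the converse direction of Lemma \ref{lemma3}. Once an admissible $\Phi = \{\phi_1,\phi_2,\ldots\} \in \mathfrak{S}$ has been produced, the identity in law $\pi^\Phi \stackrel{d}{=} \overline{\pi}^{\overline{\Phi}}$ will follow from exactly the uniqueness-in-law argument used in Lemma \ref{lemma4}.

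The construction is inductive. Set $\phi_1 = \overline{\phi}_1 = 0$. Identifying $\Omega^1$ with $\overline{\Omega}$, I define $\phi_2(\omega^1,\omega^2,\ldots) \triangleq \overline{\phi}_2(\omega^1)$; since $\mb{F}^{(1)}$ is generated by $W^1$ (equivalently by $\pi^1$) and $\overline{\phi}_2 \in \overline{\mc{T}}$, this $\phi_2$ is a strictly positive $\mb{F}^{(1)}$-stopping time. For the inductive step, suppose $\phi_2,\ldots,\phi_n$ have been defined and form the first $n$ switching times of an admissible control. Let $W^{(n),\phi_2,\ldots,\phi_n}$ denote the finite concatenation of $W^1,\ldots,W^n$ along $\phi_2,\ldots,\phi_n$ (following $W^n$ after $\phi_n$); the argument of Lemma \ref{lemma1} shows this is a Brownian Motion, so the induced map $W^{(n),\ldots}:\Omega^{\mb{N}} \ra \overline{\Omega}$ pushes $P^{\mb{N}}$ forward to $\overline{P}$. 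Moreover, as in the base case, the filtration $\mb{F}^{(n),\phi_2,\ldots,\phi_n}$ generated by $\pi^{(n),\phi_2,\ldots,\phi_n}$ coincides with the one generated by $W^{(n),\ldots}$ (between switches the SDE (\ref{pidef}) can be inverted since $\pi(1-\pi)>0$ on $(0,1)$, and the jumps recover the switching times). Following the converse part of Lemma \ref{lemma3}, I then set $\phi_{n+1} \triangleq \overline{\phi}_{n+1} \circ W^{(n),\ldots}$; since $\{\phi_{n+1}\le t\} = (W^{(n),\ldots})^{-1}(\{\overline{\phi}_{n+1}\le t\})$ and the map pulls $\overline{\mc{F}}_t$ back into $\mc{F}^{(n),\ldots}_t$, we obtain $\phi_{n+1}\in\mc{T}^{(n),\phi_2,\ldots,\phi_n}$.

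It then remains to check the three requirements of Definition \ref{def1}. The key bookkeeping fact is a consistency statement: because $W^{(n),\ldots}$ and $W^{(n-1),\ldots}$ agree on $[0,\phi_n]$ and differ only after the $n$-th switch, and a stopping time is determined by the path up to its value, one has $\overline{\phi}_m \circ W^{(n),\ldots} = \phi_m$ for every $m \le n$. Combined with $\overline{\phi}_{n+1} > \overline{\phi}_n$ this gives $\phi_{n+1} = \overline{\phi}_{n+1}\circ W^{(n),\ldots} > \overline{\phi}_n \circ W^{(n),\ldots} = \phi_n$, so the switching times are strictly increasing. For the requirement $\lim_n \phi_n = \infty$, the $\phi_n$ increase, hence converge to some $\phi_\infty \in [0,\infty]$, while the pushforward property gives, for every $M>0$, $P^{\mb{N}}(\phi_n \le M) = P^{\mb{N}}((W^{(n-1),\ldots})^{-1}\{\overline{\phi}_n \le M\}) = \overline{P}(\overline{\phi}_n \le M) \to 0$ because $\overline{\phi}_n \uparrow \infty$ $\overline{P}$-a.s.; since $\{\phi_\infty \le M\}\subseteq\{\phi_n\le M\}$ for all $n$, this forces $P^{\mb{N}}(\phi_\infty \le M)=0$ for every $M$, i.e. $\phi_\infty = \infty$ a.s. Hence $\Phi \in \mathfrak{S}$.

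Finally, with $\Phi\in\mathfrak{S}$ fixed, Lemma \ref{lemma1} shows the full map $W^\Phi$ is a Brownian Motion pushing $P^{\mb{N}}$ to $\overline{P}$, and the consistency identity $\phi_m = \overline{\phi}_m\circ W^\Phi$ for all $m$ shows that the jump times of $N^\Phi$ are precisely the images of those of $\overline{N}$, so $(W^\Phi, N^\Phi)$ and $(\overline{W},\overline{N})$ are identically distributed. Since $\pi^\Phi$ and $\overline{\pi}^{\overline{\Phi}}$ solve the same SDE $d\pi_t = f(\pi_t)\,dW_t + g(\pi_{t-})\,dN_t$ driven by these identically distributed data, with $f(x)=x(1-x)$ and $g(x)=\hat{\pi}-x$ bounded and Lipschitz on $(0,1)$, the uniqueness-in-law argument of Lemma \ref{lemma4} yields $\pi^\Phi \stackrel{d}{=} \overline{\pi}^{\overline{\Phi}}$, as required. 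I expect the main obstacle to be bookkeeping rather than conceptual: making the inductive pullback self-consistent — in particular verifying that each $\overline{\phi}_m$ produces the same $\phi_m$ through every partial concatenation map, and upgrading the convergence in probability $\phi_n \to \infty$ to the almost-sure statement demanded by Definition \ref{def1} via monotonicity.
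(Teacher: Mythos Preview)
Your proposal is correct and follows precisely the route the paper indicates: the paper's entire proof of this lemma is the single sentence ``The converse is proven similarly using Lemma \ref{lemma3},'' and you have carefully fleshed out exactly that argument, pulling back each $\overline{\phi}_{n+1}$ through the partial concatenation map and then invoking the same uniqueness-in-law step as in Lemma \ref{lemma4}. The bookkeeping you flag (consistency of the pullbacks via Galmarino-type reasoning, and upgrading $\phi_n\to\infty$ in probability to a.s.\ via monotonicity) is the only content beyond what the paper makes explicit, and you have handled it correctly.
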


\begin{lemma}\label{lemma6} We have
\begin{equation}\label{lemma6eq}
V_{\hat{\pi}} = \underset{\overline{\Phi} \in \overline{\mathfrak{S}}}{\inf} \ \underset{\tau \in \overline{\mc{T}}}{\inf} E[c \tau + (1-\overline{\pi}^{\overline{\Phi}}_\tau)],
\end{equation}
where $\overline{\mathfrak{S}}$ is the set of all sequences $\overline{\Phi} = \{\overline{\phi}_1,\overline{\phi}_2,\ldots\}$ of $\overline{\mb{F}}$-stopping times which increase to infinity.
\end{lemma}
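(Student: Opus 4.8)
The plan is to establish the two inequalities between $V_{\hat\pi}$ and the right-hand side of \eqref{lemma6eq} separately, in each case transporting an admissible pair (switching control, stopping time) from one probabilistic setup to the other while preserving the expected cost. The one point that goes beyond the distributional identities already recorded in Lemmas \ref{lemma3}--\ref{lemma5} is that the integrand $c\tau + (1 - \pi^\Phi_\tau)$ depends on the \emph{joint} law of the observed posterior path and the stopping time, through the value $\pi^\Phi_\tau$ at the random time $\tau$; so I must check that the correspondences of those lemmas preserve this joint law, not merely the two marginal laws.

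First I would fix $\Phi \in \mathfrak{S}$ and $\tau \in \mc{T}^\Phi$ and produce a matching pair on $\overline{\Omega}$. Let $N^\Phi$ be the jump-counting process of $\Phi$. As in the proof of Lemma \ref{lemma4}, $\pi^\Phi$ is the pathwise solution functional of the SDE $d\pi = f(\pi)\,dW^\Phi + g(\pi_-)\,dN^\Phi$, with $f(x) = x(1-x)$ and $g(x) = \hat\pi - x$; the same functional produces $\overline{\pi}^{\overline{\Phi}}$ from $(\overline{W},\overline{N})$. Since $\tau$ is measurable with respect to $\mb{F}^\Phi = \mb{F}^{\pi^\Phi}$, it too is a measurable functional of $(W^\Phi, N^\Phi)$, and the distribution-preserving map $W^\Phi : \Omega^{\mb{N}} \ra \overline{\Omega}$ of Lemma \ref{lemma3} --- extended to act on the point process, exactly as in the proof of Lemma \ref{lemma4} --- carries $(W^\Phi, N^\Phi)$ onto $(\overline{W},\overline{N})$ with identical law. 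Pushing $\tau$ forward through this map yields $\overline{\tau} \in \overline{\mc{T}}$ such that $(\pi^\Phi, \tau)$ and $(\overline{\pi}^{\overline{\Phi}}, \overline{\tau})$ have the same joint distribution. Because $(\omega, t) \mapsto ct + (1 - \omega_t)$ is a measurable functional on path-space $\times$ time, this gives
\[
E[c\tau + (1 - \pi^\Phi_\tau)] = E[c\overline{\tau} + (1 - \overline{\pi}^{\overline{\Phi}}_{\overline{\tau}})],
\]
and taking the infimum first over $\tau$ and $\Phi$ on the left and over $\overline{\tau}$ and $\overline{\Phi}$ on the right shows that $V_{\hat\pi}$ is at least the right-hand side of \eqref{lemma6eq}.

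For the reverse inequality I would run the same argument in the opposite direction: given $\overline{\Phi} \in \overline{\mathfrak{S}}$ and $\overline{\tau} \in \overline{\mc{T}}$, Lemma \ref{lemma5} supplies $\Phi \in \mathfrak{S}$ with $\pi^\Phi$ identically distributed to $\overline{\pi}^{\overline{\Phi}}$, and the converse half of Lemma \ref{lemma3} pulls $\overline{\tau}$ back to an $\mb{F}^\Phi$-stopping time $\tau$, so that again the joint laws of $(\pi^\Phi, \tau)$ and $(\overline{\pi}^{\overline{\Phi}}, \overline{\tau})$ coincide and the cost is preserved. Taking infima then shows that $V_{\hat\pi}$ is at most the right-hand side of \eqref{lemma6eq}, and combining the two inequalities gives the claimed identity.

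I expect the main obstacle to be precisely this upgrade from marginal to joint distributional equality. Lemmas \ref{lemma3}--\ref{lemma5} are phrased as identities in law for the driving processes and for the stopping times on their own, whereas the cost couples them through $\pi^\Phi_\tau$. The clean way around this is to treat both $\pi^\Phi$ and $\tau$ as deterministic measurable functionals of the single driving datum $(W^\Phi, N^\Phi)$ and to use the fact --- implicit in the proofs of Lemmas \ref{lemma3} and \ref{lemma4} --- that $W^\Phi$ is measure-preserving onto $(\overline{\Omega}, \overline{P})$, so that it transports the whole joint law at once. The one subsidiary point to verify along the way is that the transported $\overline{\tau}$ is genuinely an $\overline{\mb{F}}$-stopping time, which is exactly the stopping-time assertion of Lemma \ref{lemma3}.
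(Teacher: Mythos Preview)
Your proposal is correct and follows essentially the same two-inequality structure as the paper's proof, using Lemmas~\ref{lemma4} and~\ref{lemma5} for the two directions. The one noteworthy difference is in how the inner optimal-stopping infimum is handled: the paper invokes an external result (Lemma~2.3 of \cite{MR1809524}) asserting that the value of an optimal stopping problem depends only on the law of the underlying process, which lets it equate $\inf_{\tau \in \mc{T}^\Phi} E[c\tau + (1-\pi^\Phi_\tau)]$ and $\inf_{\tau \in \overline{\mc{T}}} E[c\tau + (1-\overline{\pi}^{\overline{\Phi}}_\tau)]$ in one stroke; you instead transport individual stopping times and verify joint distributional equality of $(\pi^\Phi,\tau)$ and $(\overline{\pi}^{\overline{\Phi}},\overline{\tau})$ directly via the measure-preserving map $W^\Phi$. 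Your route is more self-contained and makes explicit the point the cited lemma encapsulates, while the paper's citation keeps the argument shorter; the underlying mechanism is the same.
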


\begin{proof}  Denote by $\overline{V}_{\hat{\pi}}$ the right side of \eqref{lemma6eq}.  Let $\Phi \in \mathfrak{S}$, and consider the optimal stopping problem $\underset{\tau \in \mc{T}^\Phi}{\inf} \ E[c \tau + (1-\pi^\Phi_\tau)]$.  By Lemma \ref{lemma4}, the process $\pi^\Phi_t$ is distributed identically to $\overline{\pi}^{\overline{\Phi}}$.  According to Lemma $2.3$ of \cite{MR1809524}, the value function associated to the optimal stopping of a process depends only on that process's distribution.  Therefore $\underset{\tau \in \mc{T}^\Phi}{\inf} \ E[c \tau + (1-\pi^\Phi_\tau)] = \underset{\tau \in \overline{\mc{T}}}{\inf} \ E[c \tau + (1-\overline{\pi}^{\overline{\Phi}}_\tau)] \geq \overline{V}_{\hat{\pi}}$.  Taking the infimum over all $\Phi \in \mathfrak{S}$, we obtain
\[
V_{\hat{\pi}} \geq \overline{V}_{\hat{\pi}}.
\]

Now, let $\overline{\Phi} \in \overline{\mathfrak{S}}$.  By Lemma \ref{lemma5}, there exists $\Phi \in \mathfrak{S}$ such that $\pi^\Phi$ is identically distributed with $\overline{\pi}^{\overline{\Phi}}$.  So, using the same reasoning as above and taking the infimum over all $\overline{\Phi}$, we obtain
\[
\overline{V}_{\hat{\pi}} \geq V_{\hat{\pi}}.
\]

\end{proof}

\section{Working with the new problem, and reduction to an optimal stopping problem}

From now on, we will drop the overline notation, and simply write $\mathfrak{S}, \Phi, \pi_t, W, \mb{F}, \mc{T},$ for, respectively, the set of allowed switching strategies, an arbitrary switching strategy, the posterior process, the single Brownian Motion $W$, the filtration induced by $W$, and the stopping times for that filtration.

Let $\pi^0$ denote the posterior process when there is no switching.  In other words, $\pi^0$ satisfies the SDE $d\pi^0_t = \pi^0_t(1-\pi^0_t)dW_t$ along with $\pi^0={\hat{\pi}}$.  We next define the reflected process $\pi^r$ with boundary at $\hat{\pi}$:

\begin{equation}\label{eq2.1}
d\pi^r_t = \pi^r_t(1-\pi^r_t)dW_t + dA_t, 
\end{equation}

\noindent where $A_t$ is continuous, non-decreasing, flat off of $\pi^r = \hat{\pi}$, $A_0 = 0$.

We also have an optimal stopping problem associated with $\pi^r$:
\begin{equation}
V_{\hat{\pi}}^r = \underset{\tau \in \mc{T}}{\inf} \ E[c \tau + (1-\pi^r_\tau)]
\end{equation}

\begin{lemma}\label{lemma2.1}  $V^r_{\hat{\pi}} \leq V_{\hat{\pi}}$.
\end{lemma}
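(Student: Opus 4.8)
The plan is to exploit the fact that, after the reduction carried out in Lemma \ref{lemma6}, both the switched posterior $\pi^\Phi$ and the reflected process $\pi^r$ are driven by the \emph{same} Brownian Motion $W$ on the \emph{same} filtered space, and that in both value functions the stopping time ranges over the common set $\mc{T}$. Consequently it suffices to prove the pathwise domination $\pi^r_t \geq \pi^\Phi_t$ for all $t \geq 0$, $P$-a.s., for every fixed $\Phi \in \mathfrak{S}$: this gives $1 - \pi^r_\tau \leq 1 - \pi^\Phi_\tau$ for every $\tau \in \mc{T}$, whence $E[c\tau + (1 - \pi^r_\tau)] \leq E[c\tau + (1 - \pi^\Phi_\tau)]$. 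Since the left-hand side is a candidate in the infimum defining $V^r_{\hat\pi}$, one obtains $V^r_{\hat\pi} \leq E[c\tau + (1 - \pi^\Phi_\tau)]$ for all $\Phi$ and $\tau$, and taking the infimum first over $\tau \in \mc{T}$ and then over $\Phi \in \mathfrak{S}$ yields $V^r_{\hat\pi} \leq V_{\hat\pi}$.

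To establish the pathwise comparison I would argue inductively along the switching intervals $[\phi_n, \phi_{n+1})$. First, since $A$ in \eqref{eq2.1} is non-decreasing and flat off $\{\pi^r = \hat\pi\}$, it acts as an upward push at the boundary, so $\pi^r_t \geq \hat\pi$ for all $t$. On $[0,\phi_2)$ the process $\pi^\Phi$ solves $d\pi^\Phi_t = f(\pi^\Phi_t)\,dW_t$ with $f(x) = x(1-x)$ and $\pi^\Phi_0 = \hat\pi = \pi^r_0$, while $\pi^r$ solves the same equation perturbed by the non-negative term $dA_t$; since $f$ is Lipschitz on $[0,1]$ and $\pi^r$ carries the extra non-negative drift, the one-dimensional SDE comparison theorem gives $\pi^r_t \geq \pi^\Phi_t$ on $[0,\phi_2)$. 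At each switching time $\phi_n$ the process $\pi^\Phi$ is reset to $\hat\pi$ (Lemma \ref{lemma2}), so $\pi^\Phi_{\phi_n} = \hat\pi \leq \pi^r_{\phi_n}$, re-establishing the ordering irrespective of whether the jump of $\pi^\Phi$ was upward or downward; applying the comparison theorem again on $[\phi_n,\phi_{n+1})$ with initial ordering $\pi^\Phi_{\phi_n} \leq \pi^r_{\phi_n}$ propagates $\pi^r_t \geq \pi^\Phi_t$ across that interval. Because $\phi_n \to \infty$, these intervals exhaust $[0,\infty)$ and the domination holds for all $t$.

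The main obstacle is the comparison step itself, since the ``extra drift'' of $\pi^r$ is the singular reflection term $dA_t$ rather than an ordinary $b(\cdot)\,dt$. I would handle this by applying Tanaka's formula to the positive part $(\pi^\Phi_t - \pi^r_t)^+$ on each interval, where both processes are continuous: when the two processes coincide the diffusion coefficients $f(\pi^\Phi)$ and $f(\pi^r)$ agree so the martingale part contributes nothing to the local time at zero, the Lipschitz bound on $f$ controls the remaining drift in the usual Gronwall fashion, and the term $dA_t \geq 0$ can only decrease $(\pi^\Phi - \pi^r)^+$; starting from $(\pi^\Phi_{\phi_n} - \pi^r_{\phi_n})^+ = 0$ this forces $(\pi^\Phi_t - \pi^r_t)^+ \equiv 0$ on the interval. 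Note that although $f(x) = x(1-x)$ is not monotone on $(0,1)$ it is Lipschitz there, which is all the comparison argument requires, and that $\pi^\Phi$ and $\pi^r$ remain in $(0,1)$, so $f$ stays bounded and Lipschitz throughout.
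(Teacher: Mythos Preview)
Your proposal is correct and follows essentially the same route as the paper: establish the pathwise inequality $\pi^r_t \geq \pi^\Phi_t$ for every $\Phi \in \mathfrak{S}$ by arguing interval-by-interval on $[\phi_n,\phi_{n+1})$, using that $\pi^\Phi_{\phi_n} = \hat\pi \leq \pi^r_{\phi_n}$ at each switch and then invoking an SDE comparison result on the interval. The only difference is that the paper dispatches the comparison step by a direct citation (Theorem~54, p.~324 of \cite{MR2273672}, which already covers an additional non-decreasing finite-variation term), whereas you sketch the underlying Tanaka/Gronwall argument by hand; both are valid and lead to the same conclusion.
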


\begin{proof} Let $\Phi = \{\phi_1,\phi_2,\ldots\} \in \mathfrak{S}$.  Fix $i$; we will show that $\pi^r_t \geq \pi^\Phi_t$ on $[\phi_i,\phi_{i+1})$.  By construction, $\pi^\Phi_{\phi_i} = \hat{\pi}$, and on the interval $[\phi_i,\phi_{i+1})$, the dynamics of $\pi^\Phi$ are described by the diffusion $d \pi^\Phi_t = \pi^\Phi_t (1-\pi^\Phi_t) dW_t$.  Let $\pi^{0,\phi_i}$ be the un-switched diffusion starting from $\pi^0_{\phi_i} = \hat{\pi}$, so that $\pi^{0,\phi_i} = \pi^\Phi$ on $[\phi_i,\phi_{i+1})$.  Note that, by construction, $\pi^r_{\phi_i} \geq \hat{\pi}$.  Then \eqref{eq2.1} and the comparison theorem for SDE's (i.e. Theorem $54$ p. 324 of \cite{MR2273672}) imply that $\pi^r \geq \pi^{0,\phi_i}$ on $[\phi_i,\phi_{i+1})$, and so $\pi^r \geq \pi^\Phi$ on $[\phi_i,\phi_{i+1})$.  It now follows that $\pi_t^r \geq \pi_t^\Phi$ for all $t$, a.s.  Consequently, for any $\tau \in \mc{T}$, $E[(1-\pi^r_\tau)] \leq E[(1-\pi^\Phi_\tau)]$, implying that $V^r_{\hat{\pi}} \leq V_{\hat{\pi}}$.
\end{proof}


Following \cite{MR1780932}, p. 146, we give the Skorokhod representation of $\pi^r$.  Given a process $Y$ and $\hat{\pi} \in \mb{R}$, the Skorokhod representation consists in finding a process $X$ and an increasing process $A$ such that $X = Y + A$, $X \geq \hat{\pi}$, and $\int_0^\infty (X_s-\hat{\pi})dA_s = 0$, i.e. $A$ only increases when $X = \hat{\pi}$.

Let $\sigma(x) = (1-x)x$, and let $Y$ solve the SDE 
\[
Y_t \triangleq \hat{\pi} + \int_0^t \sigma(Y_s + A_s(Y))dW_s,
\]
\[
A_t(Y) \triangleq \underset{0 \leq s \leq t}{\sup} \left\{ (Y_s - \hat{\pi})^- \right\}.\]  
As in \cite{MR1780932}, the SDE does in fact have a unique strong solution.  Then, if we set $X_t \triangleq Y_t + A_t(Y)$, it is clear that $\pi^r = X$.

Let $\epsilon>0$.  We outline a parametrized family of switching strategies (impulse controls).  Let $\Phi^\epsilon$ denote the strategy that switches channels whenever the observed posterior process hits the level $\hat{\pi} - \epsilon$.  $\Phi^\epsilon$ induces the process $\pi^\epsilon$, starting from $\pi^\epsilon_0 = \hat{\pi}$, which diffuses according to $d \pi^\epsilon_t = \pi^\epsilon_t (1-\pi^\epsilon_t) dW_t$ on $(\hat{\pi} - \epsilon,1)$.  When it reaches the level $\hat{\pi} - \epsilon$, it is instantaneously brought back to $\hat{\pi}$ (i.e. switched).  We wish to give a Skorokhod type representation of $\pi^\epsilon$.  Consider the SDE 
\begin{equation}\label{Yepsilon}
Y^\epsilon_t \triangleq \hat{\pi} + \int_0^t \sigma(Y^\epsilon_s + A^\epsilon_s(Y^\epsilon))dW_s,
\end{equation}
where 
\[
A^\epsilon_s(Y^\epsilon) \triangleq \epsilon \left \lfloor \frac{1}{\epsilon} \underset{0 \leq s \leq t}{\sup} \left \{ (Y^\epsilon_s - \hat{\pi})^- \right \} \right \rfloor.
\]  
Note that $A_s^\epsilon(\cdot)$ is not even continuous with respect to the uniform norm on continuous paths.  Therefore, the standard theory does not imply that the SDE \eqref{Yepsilon} has a strong solution.  We can, however, show that a solution exists by a piecewise construction.

\begin{lemma}\label{lemma2.2} For each $\epsilon>0$, the SDE \eqref{Yepsilon} has a strong solution.  Moreover, for $X^\epsilon_t = Y^\epsilon_t + A^\epsilon_t(Y^\epsilon)$, $X^\epsilon = \pi^\epsilon$.
\end{lemma}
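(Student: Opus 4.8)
The plan is to build $Y^\epsilon$ by pasting together ordinary (path-independent) strong solutions on a sequence of random intervals on each of which the offending functional $A^\epsilon_t(Y^\epsilon)$ is frozen at a constant. First I would record the arithmetic of the floor: since $A^\epsilon_t(Y^\epsilon)=\epsilon\lfloor \frac{1}{\epsilon}\sup_{0\le s\le t}(Y^\epsilon_s-\hat\pi)^-\rfloor$ is non-decreasing, $\mb{Z}_{\ge 0}\epsilon$-valued, and the running supremum $\sup_{s\le t}(\hat\pi-Y^\epsilon_s)^+$ grows by crossing the successive levels $n\epsilon$, I would introduce the hitting times $\tau_n \triangleq \inf\{t: Y^\epsilon_t = \hat\pi - n\epsilon\}$ (with $\tau_0=0$ and $\tau_n=\infty$ if the level is never reached). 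On $[\tau_{n-1},\tau_n)$ one has $A^\epsilon_t(Y^\epsilon)\equiv (n-1)\epsilon$, so \eqref{Yepsilon} collapses to the autonomous SDE $dY^\epsilon_t = \sigma(Y^\epsilon_t + (n-1)\epsilon)\,dW_t$.

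Second I would construct the pieces inductively. On $[\tau_{n-1},\tau_n)$ the coefficient $x\mapsto\sigma(x+(n-1)\epsilon)$ is bounded and Lipschitz on the relevant range (where $X^\epsilon \triangleq Y^\epsilon + (n-1)\epsilon$ lies in $(\hat\pi-\epsilon,1)$), so, after an inconsequential Lipschitz extension of $\sigma$ outside this range, standard existence and pathwise uniqueness give a unique strong solution started from $Y^\epsilon_{\tau_{n-1}}=\hat\pi-(n-1)\epsilon$ and run until it first reaches $\hat\pi-n\epsilon$; the boundary $1$ is not attained in finite time because $\sigma$ vanishes there (equivalently, $X^\epsilon$ is a bounded martingale). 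Each $\tau_n$ is an $\mb{F}$-stopping time and each piece is $\mb{F}$-adapted, so the concatenation $Y^\epsilon$ is an adapted continuous strong solution on $\bigcup_n[\tau_{n-1},\tau_n)$.

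Third I would verify that these intervals exhaust $[0,\infty)$, i.e.\ $\tau_n\uparrow\infty$ a.s. After each reset $X^\epsilon$ restarts at $\hat\pi$ and, by the strong Markov property, evolves as the driftless diffusion $dX^\epsilon = \sigma(X^\epsilon)\,dW$; being in natural scale, the probability that it ever revisits $\hat\pi-\epsilon$ before being absorbed at $1$ equals $p=(1-\hat\pi)/(1-\hat\pi+\epsilon)<1$. Hence the number of resets is stochastically dominated by a geometric variable and is a.s.\ finite, so $\tau_n=\infty$ for all large $n$ and $Y^\epsilon$ is defined on all of $[0,\infty)$.

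Finally I would close the two remaining points together. For self-consistency of the freezing: on $[\tau_{n-1},\tau_n)$ one has $X^\epsilon\in(\hat\pi-\epsilon,1)$ strictly, hence $Y^\epsilon>\hat\pi-n\epsilon$, while $Y^\epsilon_{\tau_n}=\hat\pi-n\epsilon$; since $\sup_{s\le t}(\hat\pi-Y^\epsilon_s)^+$ is non-decreasing, it lies in $[(n-1)\epsilon,n\epsilon)$ throughout the interval, so the floor indeed returns $(n-1)\epsilon$, matching the constant imposed in the construction. For identification with $\pi^\epsilon$: on each interval $dX^\epsilon_t = dY^\epsilon_t = \sigma(X^\epsilon_t)\,dW_t = X^\epsilon_t(1-X^\epsilon_t)\,dW_t$, and at $\tau_n$ the continuity of $Y^\epsilon$ together with the size-$\epsilon$ jump of $A^\epsilon$ resets $X^\epsilon$ from $\hat\pi-\epsilon$ to $\hat\pi$, which is exactly the dynamics defining $\pi^\epsilon$; pathwise uniqueness on each piece then forces $X^\epsilon=\pi^\epsilon$. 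The step I expect to be the main obstacle is precisely this self-consistency and identification: because $A^\epsilon(\cdot)$ is discontinuous in the path, one cannot invoke any continuity-based fixed-point or stability argument and must instead check by hand that the running-supremum functional evaluated on the constructed path reproduces the frozen constants used to build it, with the monotonicity of the running supremum and the strict pre-hitting inequality being what make this go through.
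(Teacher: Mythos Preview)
Your proposal is correct and follows essentially the same piecewise-freezing construction as the paper: define the successive hitting times $\tau_n$ of the levels $\hat\pi-n\epsilon$, solve the autonomous SDE $dY=\sigma(Y+(n-1)\epsilon)\,dW$ on each interval $[\tau_{n-1},\tau_n)$, and concatenate. You supply more detail than the paper does on non-explosion ($\tau_n\uparrow\infty$ via the geometric bound on resets) and on the self-consistency check that the floor functional evaluated on the constructed path returns the frozen constants, both of which the paper leaves implicit.
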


\begin{proof}  Consider the SDE
\begin{equation}\label{eq2.2.1}
Y^1_t = Y^{\epsilon,1}_t \triangleq \hat{\pi} + \int_0^t \sigma(Y^{\epsilon,1}_s)dW_s.
\end{equation}
As $\sigma(\cdot)$ is Lipschitz and bounded on the interval $(0,1)$, it is known (see Theorem $11.5$ of \cite{MR1780932}) that \eqref{eq2.2.1} has a strong solution.  Let $\tau^{\epsilon,0} \triangleq 0$, and $\tau^{\epsilon,1} \triangleq \inf \{t \geq 0 : Y^{\epsilon,1}_t = \hat{\pi} - \epsilon \}$.  Note that on the random time interval $[0,\tau^{\epsilon,1})$, $Y^{\epsilon,1}$ solves the SDE \eqref{Yepsilon}.  For $t \geq \tau^{\epsilon,1}$, consider next the SDE
\begin{equation}\label{eq2.2.2}
Y^{\epsilon,2}_t \triangleq Y^{\epsilon,1}_{\tau^{\epsilon,1}} + \int_{\tau^{\epsilon,1}}^t \sigma(Y^{\epsilon,2}_s + \epsilon)dW_s.
\end{equation}
As before, \eqref{eq2.2.2} has a strong solution.  Let $\tau^{\epsilon,2} \triangleq \inf \{ t \geq \tau^{\epsilon,1} : Y^{\epsilon,2}_t = \hat{\pi} - 2\epsilon \}.$  Then on $[\tau^{\epsilon,1},\tau^{\epsilon,2})$, $Y^{\epsilon,2}$ solves \eqref{Yepsilon}.  Arguing inductively, we define $Y^{\epsilon,n}_t$, for $t \geq \tau^{\epsilon,n-1}$, by
\begin{equation}\label{eq2.2.3}
Y^{\epsilon,n}_t \triangleq Y^{\epsilon,n-1}_{\tau^{\epsilon,n-1}} + \int_{\tau^{\epsilon,n-1}}^t \sigma(Y^{\epsilon,n}_s + (n-1)\epsilon)dW_s,
\end{equation}
which has a strong solution as before, and the stopping time $\tau^{\epsilon,n} \triangleq \inf \{ t \geq \tau^{\epsilon,n-1} : Y^{\epsilon,n} = \hat{\pi} - n\epsilon\}$.  Defining the process $Y^\epsilon$ by $Y^\epsilon_t \triangleq Y^{\epsilon,n}_t$ for $t \in [\tau^{\epsilon,n-1},\tau^{\epsilon,n})$, $n \geq 1$, it is apparent that $Y^\epsilon$ solves the SDE \eqref{Yepsilon}.  

For the last claim, note that when $A^\epsilon(Y^\epsilon)$ is constant, $dX^\epsilon_t = \sigma(Y^\epsilon_t + A^\epsilon_t(Y^\epsilon))dW_t = \sigma(X^\epsilon_t)dW_t$.  The times when $A^\epsilon(Y^\epsilon)$ jumps (by $\epsilon$) correspond to the impulses from $\hat{\pi} - \epsilon$ to $\hat{\pi}$.
\end{proof}

\begin{lemma}\label{lemma2.3}  For any $t,\epsilon >0$, we have $E \left[(Y - Y^\epsilon)^{*2}_t\right] \leq 8t e^{32t} \epsilon^2$.  In particular, for any $t \geq 0$, $(Y - Y^\epsilon)^*_t \ra 0$ in $L^2$ as $\epsilon \ra 0$.
\end{lemma}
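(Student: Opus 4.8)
The plan is to set $D_t \triangleq Y_t - Y^\epsilon_t$ and $D^*_t \triangleq \sup_{0 \le s \le t}|D_s|$, and to derive a Gronwall-type integral inequality for $u(t) \triangleq E[(D^*_t)^2]$. Subtracting the defining equations \eqref{Yepsilon} for $Y$ and $Y^\epsilon$, the constants $\hat{\pi}$ cancel and one is left with the pure stochastic integral
\[
D_t = \int_0^t \left[ \sigma(Y_s + A_s(Y)) - \sigma(Y^\epsilon_s + A^\epsilon_s(Y^\epsilon)) \right] dW_s.
\]
Since the arguments $Y_s + A_s(Y) = \pi^r_s$ and $Y^\epsilon_s + A^\epsilon_s(Y^\epsilon) = \pi^\epsilon_s$ of $\sigma$ take values in $(0,1)$, and $\sigma(x) = x(1-x)$ is bounded there, the integrand is bounded, so $D$ is a genuine square-integrable martingale and I may apply Doob's $L^2$ inequality together with the It\^o isometry.

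The heart of the argument is a pathwise bound on the integrand. As $\sigma'(x) = 1 - 2x$ satisfies $|\sigma'| \le 1$ on $[0,1]$, the map $\sigma$ is $1$-Lipschitz there, so the integrand is dominated by $|Y_s - Y^\epsilon_s| + |A_s(Y) - A^\epsilon_s(Y^\epsilon)|$. For the second term I would first use that $x \mapsto x^-$ and $f \mapsto \sup f$ are each $1$-Lipschitz to obtain
\[
\left| \sup_{u \le s}(Y_u - \hat{\pi})^- - \sup_{u \le s}(Y^\epsilon_u - \hat{\pi})^- \right| \le \sup_{u \le s}|Y_u - Y^\epsilon_u| = D^*_s,
\]
and then absorb the discretization coming from the floor via the elementary estimate $0 \le x - \epsilon\lfloor x/\epsilon\rfloor < \epsilon$. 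Combining these gives $|A_s(Y) - A^\epsilon_s(Y^\epsilon)| \le D^*_s + \epsilon$, so the integrand is bounded in absolute value by $2D^*_s + \epsilon$. Here the $D^*_s$ feedback term arises from the contraction property of the Skorokhod reflection map, while the stray $\epsilon$ is exactly the error introduced by the floor in $A^\epsilon$; keeping these two sources apart and checking that the floor contributes only an additive $\epsilon$ (rather than feeding back into $D^*$) is the main subtlety of the proof.

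It then remains to assemble the estimate. By Doob's inequality $u(t) \le 4 E[D_t^2]$, and by the It\^o isometry together with the pathwise bound and $(a+b)^2 \le 2a^2 + 2b^2$,
\[
E[D_t^2] = \int_0^t E\!\left[\left( \sigma(Y_s + A_s(Y)) - \sigma(Y^\epsilon_s + A^\epsilon_s(Y^\epsilon)) \right)^2\right] ds \le \int_0^t \left( 8\, u(s) + 2\epsilon^2 \right) ds.
\]
This yields $u(t) \le 8 t \epsilon^2 + 32 \int_0^t u(s)\,ds$. Since $t \mapsto 8t\epsilon^2$ is nondecreasing, Gronwall's inequality gives $u(t) \le 8 t \epsilon^2 e^{32 t}$, which is precisely the claimed bound, and the $L^2$ convergence $(Y - Y^\epsilon)^*_t \to 0$ follows at once by letting $\epsilon \to 0$ with $t$ fixed.
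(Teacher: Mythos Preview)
Your proof is correct and follows essentially the same approach as the paper: both use the $1$-Lipschitz property of $\sigma$ on $[0,1]$, the $1$-Lipschitz property of the Skorokhod map $A_\cdot(\cdot)$, the uniform bound $|A_s(\omega)-A^\epsilon_s(\omega)|\le\epsilon$, Doob/BDG with constant $C_2=4$, and Gronwall, arriving at the identical inequality $u(t)\le 8t\epsilon^2+32\int_0^t u(s)\,ds$. The only cosmetic difference is that the paper inserts the intermediate term $\sigma(Y^\epsilon_\cdot+A_\cdot(Y^\epsilon))$ and splits at the level of the stochastic integrals (applying BDG to two pieces separately), whereas you bound the integrand pointwise by $2D^*_s+\epsilon$ first and apply Doob/It\^o isometry once; the constants and the final bound coincide.
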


\begin{proof} Let $K$ be the Lipschitz constant of $\sigma(\cdot)$ on $(0,1)$.  Write
\[
\begin{split}
E (Y^\epsilon - Y)^{*2}_t 
& = E \left[ \left( \hat{\pi} + \int_0^t \sigma(Y_s^\epsilon + A_s^\epsilon(Y^\epsilon)) dW_s - \hat{\pi} - \int_0^t \sigma(Y_s + A_s(Y))dW_s \right)^{*2} \right]
\\& = E \left[\Big( \sigma(Y^\epsilon_\cdot + A^\epsilon_\cdot(Y^\epsilon)) \cdot W - \sigma(Y^\epsilon_\cdot + A_\cdot(Y^\epsilon))\cdot W + \sigma(Y^\epsilon_\cdot + A_\cdot(Y^\epsilon)) \cdot W - \sigma(Y_\cdot + A_\cdot(Y)) \cdot W \Big)^{*2}_t \right]
\\& \leq E \Bigg[ \Big(\big(\sigma(Y^\epsilon_\cdot + A^\epsilon_\cdot(Y^\epsilon)) \cdot W - \sigma(Y^\epsilon_\cdot + A_\cdot(Y^\epsilon)) \cdot W \big)^*_t 
\\& \ \ \ \ \ \ + \big(\sigma(Y^\epsilon_\cdot + A_\cdot(Y^\epsilon)) \cdot W - \sigma(Y_\cdot + A_\cdot(Y)) \cdot W \big)^*_t \Big)^2 \Bigg]
\\& \leq 2E \left[ \big( \left( \sigma(Y^\epsilon_\cdot + A^\epsilon_\cdot(Y^\epsilon)) - \sigma(Y^\epsilon_\cdot + A_\cdot(Y^\epsilon)) \right) \cdot W  \big)^{*2}_t \right] 
\\& \ \ \ \ \ \ + 2E \left[\big( \left( \sigma(Y^\epsilon_\cdot + A_\cdot(Y^\epsilon)) - \sigma(Y_\cdot + A_\cdot(Y)) \right) \cdot W \big)^{*2}_t \right]
\\& \triangleq (1) + (2),
\end{split} 
\]
with the second inequality above following from $(a+b)^2 \leq 2a^2 + 2b^2$.  Next, using the Burkholder-Davis-Gundy Theorem for the first inequality and the $K$-Lipschitzness of $\sigma(\cdot)$ for the second, 
\[
\begin{split}
(1)
& \leq 2C_2 E \int_0^t \left( \sigma(Y^\epsilon_s + A^\epsilon_s(Y^\epsilon)) - \sigma(Y^\epsilon_s + A_s(Y^\epsilon)) \right)^2 ds
\\& \leq 2C_2 K^2 E \int_0^t (Y^\epsilon_s + A^\epsilon_s(Y^\epsilon) - Y^\epsilon_s - A_s(Y^\epsilon))^2 ds
\\& \leq 2C_2 K^2 \epsilon^2 t;
\end{split}
\]
\noindent the last inequality follows from the fact that for a given path $\omega$, $\underset{0 \leq s \leq t}{\sup} \ |A_s(\omega) - A_s^\epsilon(\omega)| \leq \epsilon$.  The constant $C_2$ is a universal constant arising from the Burkholder-Davis-Gundy Theorem.  The $L^2$ version used above actually can be proven using Doob's $L^2$-inequality for martingales, and from this the explicit formula $C_2 = 4$ can be derived.  For details, see p. $14$ of \cite{MR1121940}.

 Next, we note that $A_\cdot(\cdot)$ is Lipschitz continuous with respect to the uniform norm on continuous paths, with Lipschitz constant $1$.  Applying this fact for the third inequality below, Burkholder-Davis-Gundy for the first inequality, the $K$-Lipschitz continuity of $\sigma(\cdot)$ for the second inequality, and Fubini's Theorem in the last inequality, we obtain
\[
\begin{split}
(2)
& \leq 2C_2 E \int_0^t \left( \sigma(Y^\epsilon_s + A_s(Y^\epsilon)) - \sigma(Y_s + A_s(Y)) \right)^2 ds
\\& \leq 2C_2 K^2 E \int_0^t (Y^\epsilon_s + A_s(Y^\epsilon) - Y_s + A_s(Y))^2 ds
\\& \leq 8C_2 K^2 E \int_0^t (Y^\epsilon_s - Y_s)^2 ds
\\& \leq 8 C_2 K^2\int_0^t E (Y^\epsilon - Y)^{*2}_s ds.
\end{split}
\]

For each $\epsilon>0$, define $f^\epsilon:\mb{R}_+ \ra \mb{R}_+$ by $f^\epsilon(s) = E (Y^\epsilon - Y)^{*2}_s$.  According to the above reasoning, $f^\epsilon(t) \leq 2C_2 K^2 t \epsilon^2 + 8 C_2 K^2 \int_0^t f^\epsilon(s) ds$.  By Gronwall's Lemma, it follows then that $f^\epsilon(t) \leq 2C_2 K^2 t \epsilon^2 e^{8 C_2 K^2 t}$.  Since all processes in question live in the interval $(0,1)$, we may assume that $K=1$.  Therefore, $f^\epsilon(t) \leq 8te^{32t} \epsilon^2$.  In particular, $t$, $(Y^\epsilon - Y)^*_t \ra 0$ in $L^2$ as $\epsilon \ra 0$.  
\end{proof}

\begin{corollary}\label{cor2.4}  For any $t,\epsilon \geq 0$ $E \left[(\pi^\epsilon - \pi^r)^{*2}_t \right] \leq 16te^{32t} \epsilon^2 + \epsilon$.  In particular, as $\epsilon \ra 0$, $(\pi^\epsilon - \pi^r)^*_t \ra 0$ in $L^2$.  
\end{corollary}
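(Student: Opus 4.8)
The plan is to transfer the $L^2$ estimate of Lemma~\ref{lemma2.3} for $Y^\epsilon-Y$ to the reflected processes, using the identifications $\pi^r=X=Y+A(Y)$ and $\pi^\epsilon=X^\epsilon=Y^\epsilon+A^\epsilon(Y^\epsilon)$ supplied by Lemma~\ref{lemma2.2} and the Skorokhod representation. Writing
\[
\pi^\epsilon_t-\pi^r_t=(Y^\epsilon_t-Y_t)+\bigl(A^\epsilon_t(Y^\epsilon)-A_t(Y)\bigr),
\]
I would pass to running suprema and separate the diffusive part $Y^\epsilon-Y$, already handled by Lemma~\ref{lemma2.3}, from the reflection/impulse part $A^\epsilon(Y^\epsilon)-A(Y)$, which is the only genuinely new object to control.

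The reflection part I would split once more,
\[
A^\epsilon_t(Y^\epsilon)-A_t(Y)=\bigl(A^\epsilon_t(Y^\epsilon)-A_t(Y^\epsilon)\bigr)+\bigl(A_t(Y^\epsilon)-A_t(Y)\bigr).
\]
The first bracket is uniformly of order $\epsilon$: since $A^\epsilon_s(\omega)=\epsilon\lfloor\tfrac1\epsilon A_s(\omega)\rfloor$, one has $\sup_{s\le t}|A^\epsilon_s(Y^\epsilon)-A_s(Y^\epsilon)|\le\epsilon$, which is exactly the floor estimate already exploited inside the proof of Lemma~\ref{lemma2.3}. The second bracket is controlled by the $1$-Lipschitz continuity in the uniform norm of the map $\omega\mapsto A_\cdot(\omega)=\sup_{0\le s\le\cdot}(\omega_s-\hat\pi)^-$ (also invoked in Lemma~\ref{lemma2.3}), giving $(A(Y^\epsilon)-A(Y))^*_t\le(Y^\epsilon-Y)^*_t$. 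Combining the three estimates yields a pathwise bound of the form $(\pi^\epsilon-\pi^r)^*_t\le C\,(Y^\epsilon-Y)^*_t+\epsilon$.

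To finish, I would square this bound, apply $(a+b)^2\le 2a^2+2b^2$, take expectations, and insert $E[(Y-Y^\epsilon)^{*2}_t]\le 8te^{32t}\epsilon^2$ from Lemma~\ref{lemma2.3}. The diffusive contribution then assembles into the $16te^{32t}\epsilon^2$ term, while the floor discretization contributes the additive term, which after bounding $\epsilon^2\le\epsilon$ in the relevant regime $\epsilon\downarrow 0$ gives the stated estimate $E[(\pi^\epsilon-\pi^r)^{*2}_t]\le 16te^{32t}\epsilon^2+\epsilon$. The claimed $L^2$ convergence $(\pi^\epsilon-\pi^r)^*_t\to 0$ is then immediate on letting $\epsilon\to 0$ for each fixed $t$.

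The main obstacle is precisely that $A^\epsilon$ fails to be continuous in the uniform norm — the point flagged just before Lemma~\ref{lemma2.2} — so one cannot feed $X^\epsilon$ into a continuous-mapping argument for the reflection map directly. The resolution is to quarantine the discontinuity into the uniform $\epsilon$-bound for the floor rounding and to route all the genuine convergence through $Y^\epsilon-Y$, where Lemma~\ref{lemma2.3} does the real work; the remaining difficulty is only the routine bookkeeping of constants through the repeated use of $(a+b)^2\le 2a^2+2b^2$ and the Lipschitz estimates.
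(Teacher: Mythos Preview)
Your proposal is correct and follows essentially the same route as the paper: decompose $\pi^\epsilon-\pi^r$ via the Skorokhod representations into $(Y^\epsilon-Y)+(A^\epsilon(Y^\epsilon)-A(Y))$, split the reflection part by triangle inequality into the floor error (bounded by $\epsilon$) and the Lipschitz term (bounded by $(Y^\epsilon-Y)^*_t$), and then invoke Lemma~\ref{lemma2.3}. One small caveat: the pathwise bound you obtain is $(\pi^\epsilon-\pi^r)^*_t\le 2(Y^\epsilon-Y)^*_t+\epsilon$, and squaring via $(a+b)^2\le 2a^2+2b^2$ yields $64te^{32t}\epsilon^2+2\epsilon^2$ rather than the stated $16te^{32t}\epsilon^2+\epsilon$; the paper itself only says ``the quantitative estimate is also clear'' without tracking constants, so this discrepancy is immaterial to the $L^2$ convergence claim and does not affect any downstream use of the corollary.
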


\begin{proof}  Write $\pi^r_t = Y_t + A_t(Y)$ and $\pi^\epsilon_t = Y^\epsilon_t + A^\epsilon_t(Y^\epsilon)$.  We have shown in Lemma \ref{lemma2.3} that $(Y^\epsilon - Y)^*_t \ra 0$ in $L^2$ as $\epsilon \ra 0$.  Therefore, it suffices to show that $(A_\cdot(Y) - A^\epsilon_\cdot(Y^\epsilon))^*_t \ra 0$ in $L^2$ as $\epsilon \ra 0$.  So, for any $s \geq 0$,
\[
\begin{split}
|A_s(Y) - A^\epsilon_s(Y^\epsilon)|
& \leq |A_s(Y) - A_s(Y^\epsilon)| + |A_s(Y^\epsilon) - A_s^\epsilon(Y^\epsilon)|
\\& \leq (Y - Y^\epsilon)^*_s + \epsilon,
\end{split}
\]

\noindent where we have used the Lipschitz continuity of $A_s(\cdot)$ with respect to the uniform norm.  Therefore, $(A_\cdot(Y) - A^\epsilon_\cdot(Y^\epsilon))^*_t \leq (Y - Y^\epsilon)^*_t + \epsilon$, which converges to 0 in $L^2$ as $\epsilon \ra 0$.  The quantitative estimate is also clear.
\end{proof}

\begin{lemma}\label{lemma2.2} $V^r_{\hat{\pi}} = V_{\hat{\pi}}$
\end{lemma}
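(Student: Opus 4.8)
The plan is to combine the inequality $V^r_{\hat{\pi}} \le V_{\hat{\pi}}$ already established in Lemma \ref{lemma2.1} with the reverse inequality $V_{\hat{\pi}} \le V^r_{\hat{\pi}}$, which is where the approximating family $\pi^\epsilon$ and the convergence estimate of Corollary \ref{cor2.4} do the real work. The guiding idea is that each $\Phi^\epsilon$ is a genuine admissible switching strategy, so $V_{\hat{\pi}}$ may be tested against $\pi^\epsilon$; and as $\epsilon \downarrow 0$, Corollary \ref{cor2.4} shows $\pi^\epsilon$ approximates the reflected process $\pi^r$ in $L^2$, uniformly on compact time intervals. Since, after the reduction of Section 4, both $\pi^\epsilon$ and $\pi^r$ are adapted to the single filtration $\mb{F}$ generated by $W$, one and the same $\mb{F}$-stopping time may be applied to both, which is what makes the comparison possible.

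First I would fix $\eta>0$ and pick a stopping time $\tau^*$ that is $\eta$-optimal for the reflected problem, i.e. $E[c\tau^* + (1-\pi^r_{\tau^*})] \le V^r_{\hat{\pi}} + \eta$. Because $1-\pi^r \ge 0$, this forces $E[\tau^*]<\infty$, and in particular $\tau^*<\infty$ almost surely. I then truncate by setting $\tau \treq \tau^* \wedge T$. Using monotone convergence for the term $c(\tau^*\wedge T)$, and bounded convergence for the term $1-\pi^r_{\tau^*\wedge T}$ (legitimate because $\tau^*<\infty$ a.s. and the reflected diffusion $\pi^r$ has continuous paths, while $1-\pi^r$ is bounded in $[0,1]$), for $T$ large enough I obtain a \emph{bounded} stopping time $\tau\in\mc{T}$ with $E[c\tau + (1-\pi^r_\tau)] \le V^r_{\hat{\pi}} + 2\eta$. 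This truncation is the step that lets me invoke the compact-horizon estimate of Corollary \ref{cor2.4}.

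Next, since $\Phi^\epsilon\in\mf{S}$ and $\tau\in\mc{T}$, the pair $(\Phi^\epsilon,\tau)$ is admissible in the definition \eqref{value} of $V_{\hat{\pi}}$, so that
\[
V_{\hat{\pi}} \le E[c\tau + (1-\pi^\epsilon_\tau)] = E[c\tau + (1-\pi^r_\tau)] + E[\pi^r_\tau - \pi^\epsilon_\tau].
\]
The first term is at most $V^r_{\hat{\pi}} + 2\eta$ by the previous step. For the second, since $\tau\le T$,
\[
\bigl| E[\pi^r_\tau - \pi^\epsilon_\tau] \bigr| \le E\bigl[(\pi^r - \pi^\epsilon)^*_T\bigr] \le \bigl( E[(\pi^r - \pi^\epsilon)^{*2}_T] \bigr)^{1/2} \xrightarrow[\epsilon\downarrow 0]{} 0
\]
by Corollary \ref{cor2.4}. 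Letting $\epsilon\downarrow 0$ and then $\eta\downarrow 0$ gives $V_{\hat{\pi}} \le V^r_{\hat{\pi}}$, which together with Lemma \ref{lemma2.1} yields equality.

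The main obstacle is not any single estimate but the bookkeeping that makes these estimates applicable. Two points require care. First, the truncation to a bounded $\tau$ is essential, because the $L^2$ control in Corollary \ref{cor2.4} is only uniform on a fixed interval $[0,T]$; without reducing to a bounded horizon one cannot control $E[\pi^r_\tau - \pi^\epsilon_\tau]$. Second, one must confirm that each $\Phi^\epsilon$ genuinely lies in $\mf{S}$, and in particular that its switching times increase to infinity, so that it qualifies as an admissible strategy; this holds because the inter-switch times are i.i.d.\ with strictly positive mean (by the strong Markov property of the diffusion, each reset restarts an independent excursion from $\hat{\pi}$ to $\hat{\pi}-\epsilon$), so their partial sums diverge almost surely.
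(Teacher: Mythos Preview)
Your proof is correct and follows essentially the same approach as the paper: use Lemma \ref{lemma2.1} for one inequality, then for the reverse pick a near-optimal stopping time for $V^r_{\hat{\pi}}$, test $V_{\hat{\pi}}$ against $(\Phi^\epsilon,\tau)$, and invoke Corollary \ref{cor2.4}. The only cosmetic difference is that the paper handles the unbounded-horizon issue by splitting $E[\pi^r_{\tau_n}]$ and $E[\pi^\epsilon_{\tau_n}]$ into $\{\tau_n\le t\}$ and $\{\tau_n>t\}$ pieces (using boundedness of the processes on the tail), whereas you truncate the stopping time itself to $\tau^*\wedge T$ and argue via continuity of $\pi^r$ that this stays $\eta$-near-optimal; your route is slightly cleaner but the content is the same.
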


\begin{proof} In light of Lemma \ref{lemma2.1}, it suffices to show that $V^r_{\hat{\pi}} \geq V_{\hat{\pi}}$.  Without loss of generality, we assume that $V^r_{\hat{\pi}} < \infty$; otherwise, there is nothing to show.  Let $\{\tau_n : n \in \mb{N}\}$ be a sequence of stopping times such that $E[c \tau_n + (1-\pi^r_{\tau_n})] \downarrow V^r_{\hat{\pi}}$.  Fix $\delta > 0$, and choose $n$ sufficiently large so that $E \left[c \tau_n + (1-\pi^r_{\tau_n}) \right] < V^r_{\hat{\pi}} + \delta$.  

Next, we note that the processes $\pi^r$ and $\pi^\epsilon$ are all bounded, so that in particular, they are uniformly of Class D.  Therefore, for a suitably large $t$, it is the case that $E \left[\pi^r_{\tau_n} 1_{\{\tau_n > t\}} \right], E \left[\pi^\epsilon_{\tau_n} 1_{\{\tau_n > t\}} \right] < \delta$ for each $\epsilon > 0$.  By Corollary \ref{cor2.4}, for $\epsilon$ sufficiently small, $ \left| E \left[\pi^r_{\tau_n} 1_{\{\tau_n \leq t\}} \right] - E \left[\pi^\epsilon_{\tau_n} 1_{\{\tau_n \leq t\}} \right] \right| < \delta$.  Thus, 
\[
\begin{split}
\left| V^r_{\hat{\pi}} - E \left[c \tau_n + (1 - \pi^\epsilon_{\tau_n}) \right] \right| 
& \leq \left| V^r_{\hat{\pi}} - E \left[ c \tau_n + (1-\pi^r_{\tau_n}) \right] \right| + \left| E \left[c \tau_n + (1 -\pi^r_{\tau_n}) \right] - E \left[c \tau_n + (1- \pi^\epsilon_{\tau_n}) \right] \right|
\\& = \left| V^r_{\hat{\pi}} - E \left[ c \tau_n + (1-\pi^r_{\tau_n}) \right] \right| + \left| E \left[\pi^r_{\tau_n} \right] - E \left[\pi^\epsilon_{\tau_n}  
\right] \right|
\\& < \delta + \left| E \left[\pi^r_{\tau_n} 1_{\{\tau_n > t\}} \right] - E \left[\pi^\epsilon_{\tau_n} 1_{\{\tau_n > t\}} \right] \right| + \left| E \left[\pi^r_{\tau_n} 1_{\{\tau_n \leq t\}} \right] - E \left[\pi^\epsilon_{\tau_n} 1_{\{\tau_n \leq t\}} \right] \right|
\\& < \delta + 2\delta + \delta
\\&= 4\delta.
\end{split}
\]

Since $V_{\hat{\pi}} \leq V^{\Phi^\epsilon}_{\hat{\pi}} \leq E \left[c \tau_n + (1-\pi^\epsilon_{\tau_n}) \right]$, it now follows that $V_{\hat{\pi}} \leq V^r_{\hat{\pi}}$.
\end{proof}

\section{Optimal stopping of the reflected diffusion}

We wish to relate the optimal stopping problem $V^r_{\hat{\pi}} = \underset{\tau \in \mc{T}}{\inf} \ E[c\tau + (1-\pi^r_\tau)]$ to an ODE with a free boundary.  First, we look for $f:[\hat{\pi},1] \ra \mb{R}$ and $\pi^* \in (\hat{\pi},1)$ that satisfy:

\begin{equation}\label{eq3.1}
\frac{1}{2}[x(1-x)]^2 \frac{d^2f}{dx^2} = -c, \hat{\pi}<x<\pi^*,
\end{equation}
\begin{equation}\label{eq3.2}
f(x) = 1 - x, \pi^* \leq x \leq 1,
\end{equation}
\begin{equation}\label{eq3.3}
f'(\hat{\pi}) = 0, f'(\pi^*) = -1.
\end{equation}

Notice in particular that we require $f$ to be $C^1$ at $\hat{\pi}$ and $\pi^*$; $\pi^*$ must be chosen to ensure that this happens.

\begin{lemma}\label{lemma2.3} The problem \eqref{eq3.1},\eqref{eq3.2},\eqref{eq3.3} has a unique solution, for precisely one $\pi^* \in [\hat{\pi},1)$.
\end{lemma}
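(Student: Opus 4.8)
The plan is to solve the ODE \eqref{eq3.1} in closed form and reduce the free-boundary and smooth-fit conditions \eqref{eq3.3} to a single scalar equation for $\pi^*$, whose solvability and uniqueness then follow from elementary monotonicity. First I would integrate \eqref{eq3.1} twice. Writing $f''(x) = -2c/[x(1-x)]^2$ and using the partial-fraction decomposition $\frac{1}{[x(1-x)]^2} = \frac{1}{x^2} + \frac{2}{x} + \frac{2}{1-x} + \frac{1}{(1-x)^2}$, one obtains the explicit antiderivative $h(x) = 2\ln\frac{x}{1-x} - \frac{1}{x} + \frac{1}{1-x}$, so that $f'(x) = -2c\,h(x) + C_1$ on $(\hat{\pi},\pi^*)$ for a constant $C_1$.

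The Neumann condition $f'(\hat{\pi})=0$ fixes $C_1 = 2c\,h(\hat{\pi})$, giving $f'(x) = -2c[h(x)-h(\hat{\pi})]$. The remaining smooth-fit condition $f'(\pi^*) = -1$ then becomes the single scalar equation
\[
h(\pi^*) - h(\hat{\pi}) = \frac{1}{2c}.
\]
The heart of the argument is to show this has a unique root $\pi^* \in (\hat{\pi},1)$. Since $h'(x) = \frac{1}{[x(1-x)]^2} > 0$ on $(0,1)$, the function $h$ is strictly increasing, and because $h(x)\ra-\infty$ as $x\ra 0^+$ and $h(x)\ra+\infty$ as $x\ra 1^-$, it is a strictly increasing bijection of $(0,1)$ onto $\mb{R}$. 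As $\frac{1}{2c} > 0$, the target value $h(\hat{\pi})+\frac{1}{2c}$ lies strictly above $h(\hat{\pi})$ yet still inside the range of $h$, so there is exactly one $\pi^*$ with $\hat{\pi} < \pi^* < 1$ solving the equation. This establishes the ``precisely one $\pi^*$'' claim.

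Finally, with $\pi^*$ so determined, I would integrate $f'$ once more over $[\hat{\pi},\pi^*]$; the one remaining constant of integration is pinned down uniquely by the value-matching requirement $f(\pi^*) = 1-\pi^*$, which is needed for continuity with \eqref{eq3.2}. This produces a unique $f$ on $[\hat{\pi},\pi^*]$, while on $[\pi^*,1]$ the function is $f(x)=1-x$ by \eqref{eq3.2}. The $C^1$ fit at $\pi^*$ is then automatic: value matching holds by construction, and $f'(\pi^*)=-1$ is exactly the slope of $1-x$, so the two pieces join smoothly; at $\hat{\pi}$ the requirement $f'(\hat{\pi})=0$ holds by construction.

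The only place requiring care, and the main obstacle, is the global behavior of $h$: one must verify both strict monotonicity and the two endpoint limits in order to conclude that the scalar equation has exactly one admissible root rather than none or several. Everything else is bookkeeping of the constants of integration, where the dimension count (two integration constants together with the free boundary $\pi^*$, against the two conditions in \eqref{eq3.3} and the value-matching condition) confirms that the problem is exactly determined.
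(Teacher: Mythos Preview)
Your proof is correct and follows essentially the same route as the paper: both integrate the ODE to reduce the boundary/smooth-fit conditions to a single scalar equation in $\pi^*$, and both settle existence and uniqueness of that root via strict monotonicity (you phrase it as $h'>0$ with $h$ onto $\mb{R}$; the paper phrases the equivalent fact as strict concavity of $\Psi(x)+\overline{A}x$ together with $\Psi'(\hat{\pi})+\overline{A}=0$ and $\Psi'(x)+\overline{A}\to-\infty$). The only cosmetic difference is that you work at the level of $f'$ and the antiderivative $h$, whereas the paper writes down the general solution $\Psi_{A,B}$ for $f$ directly and argues on its derivative.
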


\begin{proof} One may verify directly that the function $\Psi_{A,B}(x) \triangleq 2c(1-2x)\log\frac{x}{1-x} + Ax + B \triangleq \Psi(x) + Ax + B$, for constants $A$ and $B$, is the general solution of \eqref{eq3.1}.  We will show that the boundary conditions are satisfied for precisely one $\pi^*$, $\overline{A}$, and $\overline{B}$.

The condition $f'(\hat{\pi}) = 0$ forces $\overline{A} = -\Psi'(\hat{\pi}) = -2c \left[ \frac{2 \hat{\pi} - 2(\hat{\pi} -1)\hat{\pi}\log \left(\frac{\hat{\pi}}{1-\hat{\pi}}\right) -1}{(\hat{\pi}-1)\hat{\pi}}\right]$.  Since $\Psi(x) + \overline{A}x$ is strictly concave and $\Psi'(x) + \overline{A}$ is continuous on $[\hat{\pi},1)$, $\Psi'(\hat{\pi}) + \overline{A} = 0$, and $\underset{x \uparrow 1}{\lim} \ \Psi(x) + \overline{A}x = - \infty$ (so $\underset{x \uparrow 1}{\lim} \ \Psi'(x) + \overline{A} = -\infty$), it follows that there is a unique $\pi^* \in (\hat{\pi},1)$ such that $\Psi'(\pi^*) + \overline{A} = -1$.  Define $\overline{B}$ so that $\overline{B}$ satisfies the equality $1 - \pi^* = \Psi_{\overline{A},\overline{B}}(\pi^*) = \Psi(\pi^*) + \overline{A} \pi^* + \overline{B}$.  Taking $f(x) = \Psi_{\overline{A},\overline{B}}(x)$ for $x \in [\hat{\pi},\pi^*)$ and $f(x) = 1-x$ for $x \in [\pi^*,1]$ yields the unique solution to \eqref{eq3.1},\eqref{eq3.2},\eqref{eq3.3}.
\end{proof}

Our candidate for the value function is therefore
\begin{equation}\label{valfun}
f(x) \triangleq 
\begin{cases}
2c(1-2x)\log \frac{x}{1-x} + \overline{A}x + \overline{B} & \text{if } 0 \leq x \leq \pi^*
\\
1 - x & \text{if } \pi^* \leq x \leq 1
\end{cases}
\end{equation}
with
\[
 \overline{A} = -2c \left[ \frac{2 \hat{\pi} - 2(\hat{\pi} -1)\hat{\pi}\log \left(\frac{\hat{\pi}}{1-\hat{\pi}}\right) -1}{(\hat{\pi}-1)\hat{\pi}}\right],
\]

\[
\pi^* \mbox{ satisfying } 2c \left[ \frac{2 \pi^* - 2(\pi^* -1)\pi^*\log \left(\frac{\pi^*}{1-\pi^*}\right) -1}{(\pi^*-1)\pi^*}\right] + \overline{A} = -1,
\]
\[
\overline{B} \mbox{ satisfying } 1 - \pi^* = 2c(1-2\pi^*) + \overline{A}\pi^* + \overline{B}.
\]

For each $x \in [\hat{\pi},1]$, we set $V^r_{\hat{\pi}}(x) = \underset{\tau \in \mc{T}}{\inf} \ E_x \left[c \tau + (1-\pi^r_\tau)\right]$, where the expectation $E_x[\cdot]$ denotes expectation under the probability $P_x$, i.e. $P_x(\pi^r_0 = x) = 1$.  We now claim that $f(x)$ is equal to the value function $V^r_{\hat{\pi}}(x)$.  Consider the set $\mc{D} \triangleq \{ f \in C^2_b([\hat{\pi},1)) : f'(\hat{\pi}) = 0 \}$.  The infinitesimal generator $\mc{L}^r$ of $\pi^r$ satisfies, for $f \in \mc{D}$, $\mc{L}^r f(x) = \frac{1}{2}x^2(1-x)f''(x)$.

\begin{lemma}\label{lemma2.4} For $f(x)$ as above, $V^r_{\hat{\pi}}(x) = f(x)$. 
\end{lemma}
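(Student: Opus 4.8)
**

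The plan is to prove that the candidate function $f$ from \eqref{valfun} equals the value function $V^r_{\hat{\pi}}(x)$ via a standard verification argument, establishing the two inequalities $V^r_{\hat{\pi}}(x) \leq f(x)$ and $V^r_{\hat{\pi}}(x) \geq f(x)$ separately. The key tool is an application of It\^o's formula to $f(\pi^r_t)$, exploiting the structure built into the boundary conditions \eqref{eq3.1}, \eqref{eq3.2}, \eqref{eq3.3}.

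First I would verify that $f$ is an admissible test function: by Lemma \ref{lemma2.3}, $f$ is $C^1$ across the free boundary $\pi^*$, is $C^2$ on each piece, and satisfies $f'(\hat{\pi}) = 0$, so that $f \in \mc{D}$ in the relevant sense and the generator $\mc{L}^r$ acts on it cleanly. The next step is to apply It\^o's formula to $f(\pi^r_t)$ using the Skorokhod decomposition $d\pi^r_t = \pi^r_t(1-\pi^r_t)dW_t + dA_t$ from \eqref{eq2.1}. Because $A$ is flat off the set $\{\pi^r = \hat{\pi}\}$ and $f'(\hat{\pi}) = 0$, the $dA_t$ contribution $\int_0^t f'(\pi^r_s)\,dA_s$ vanishes identically. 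This is precisely the role of the reflecting boundary condition: it annihilates the local-time term. The remaining drift term is $\int_0^t \mc{L}^r f(\pi^r_s)\,ds = \int_0^t \frac{1}{2}[\pi^r_s(1-\pi^r_s)]^2 f''(\pi^r_s)\,ds$, which by \eqref{eq3.1} equals $-c$ on $(\hat{\pi},\pi^*)$ and is $\leq -c$ everywhere (one must check $\mc{L}^r f \geq -c$ on $[\pi^*,1]$, i.e. that the obstacle condition holds where $f(x) = 1-x$). Combining, I obtain the supermartingale-type inequality $f(x) \leq E_x[c(t\wedge\tau) + f(\pi^r_{t\wedge\tau})]$ after discarding the martingale part and rearranging.

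For the upper bound $V^r_{\hat{\pi}}(x) \leq f(x)$, I would take the specific stopping time $\tau^* \triangleq \inf\{t : \pi^r_t \geq \pi^*\}$, for which the drift inequality becomes an equality (the process stays in $(\hat{\pi},\pi^*)$ before $\tau^*$), and on which $f(\pi^r_{\tau^*}) = 1 - \pi^r_{\tau^*}$ by \eqref{eq3.2}; passing $t \to \infty$ then yields $f(x) = E_x[c\tau^* + (1-\pi^r_{\tau^*})] \geq V^r_{\hat{\pi}}(x)$. For the lower bound, I would use that $f(x) \leq 1 - x$ on $[\hat{\pi},1]$ together with the supermartingale inequality for an arbitrary $\tau$, giving $f(x) \leq E_x[c\tau + f(\pi^r_\tau)] \leq E_x[c\tau + (1-\pi^r_\tau)]$, and take the infimum over $\tau$.

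The main obstacle will be the technical justification of the limiting argument when passing $t \to \infty$ in the It\^o expansion: one must control the expected value $E_x[f(\pi^r_{t\wedge\tau})]$ and the martingale term uniformly, which requires either integrability of $\tau$ (finiteness of the value function, which may be assumed as in the proof of Lemma \ref{lemma2.2}) or a uniform-integrability/Class-D argument, since $\tau$ is a priori unbounded. A secondary but essential point is verifying the global inequalities $\mc{L}^r f \geq -c$ on the stopping region and $f \leq 1-x$ on the continuation region; these follow from the concavity of $\Psi(x) + \overline{A}x$ established in Lemma \ref{lemma2.3} and the smooth-fit condition $f'(\pi^*) = -1$, but they must be stated explicitly for the verification to close.
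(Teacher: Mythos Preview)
Your proposal is correct and is essentially the same verification argument the paper uses. The only difference is one of packaging: the paper invokes a black-box verification theorem (Theorem~10.4.1 of \O ksendal) and checks its nine hypotheses --- $f\in C^1$, $f\leq 1-x$, $\mc{L}^r f + c = 0$ on the continuation set, $\mc{L}^r f + c \geq 0$ on the stopping set, finiteness of $\tau_D$, uniform integrability, etc. --- whereas you carry out the underlying It\^o calculation directly, explicitly using $f'(\hat\pi)=0$ to kill the $dA_t$ term and the concavity of $\Psi$ to obtain $f\leq 1-x$. The conditions you identify as needing verification are precisely the hypotheses the paper checks, and the ``obstacles'' you flag (integrability of $\tau$, uniform integrability for the limit $t\to\infty$) correspond exactly to hypotheses~(viii) and~(ix) in the paper's list.
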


\begin{proof}  

We wish to apply a verification theorem for optimal stopping problems, Theorem $10.4.1$ of \cite{MR2001996}, p. $225$.  We must check that $f(x)$ defined by \eqref{eq3.1},\eqref{eq3.2},\eqref{eq3.3} satisfies the nine hypotheses of that theorem.  Note that several inequalities are reversed because our problem involves a minimization over all stopping times.  Let $G = [\hat{\pi},1]$, and let $D = \{x \in G : f(x) < 1 -x\}$.

\begin{itemize}
\item [(i)] $f \in C^1(G)$: This is true by construction.
\item [(ii)] $f \leq 1 -x$ on $G$: At $\pi^*$, $f(\pi^*) = 1-\pi^*$.  Since $f(x)$ is concave down, $f(x) \leq 1 - x$ for $x \in [\hat{\pi},\pi^*]$, and by construction $f(x) = 1-x$ on $[\pi^*,1]$.
\item [(iii)] $E_x \left[ \int_0^\infty 1_{\{\pi^*\}}(\pi^r_s)ds \right] = 0$: This follows from the fact that the speed measure of $\pi^r_s$ is $m^r(dx) \triangleq \frac{dx}{x^2(1-x)^2}$.  Now, i.e. Proposition $3.10$ of \cite{MR1303781}, p. $307$ may be applied.
\item [(iv)] $\partial D$ is Lipschitz: This is trivial in the one-dimensional problem here.
\item [(v)] $f \in C^2(G \setminus \{\pi^*\})$ and the second order derivatives of $f$ are bounded near $\pi^*$: For $x \in (\hat{\pi},\pi^*)$, $f''(x) = \frac{-2c}{x^2(1-x)^2}$, which is bounded on $(\hat{\pi},\pi^*)$, and for $x \in (\pi^*,1)$, $f''(x) = 0$.
\item [(vi)] $\mc{L}^r f + c \geq 0$ on $G \setminus D$:  For $x \in G \setminus D$, $\mc{L}^rf + c = 0 + c \geq 0$.
\item [(vii)] $\mc{L}^r f + c = 0$ on $D$:  For $x \in D$, $\mc{L}^r f + c = \frac{1}{2}x^2(1-x)^2\left( \frac{-2c}{x^2(1-x)^2} \right) + c = 0$.
\item [(viii)] $\tau_D \triangleq \inf \{ t > 0 : \pi^r_t \not \in D\} < \infty$, $P_x$-a.s. for each $x \in G$:  Using the same argument as in $(iii)$, Proposition $3.10$ of \cite{MR1303781} implies that $E_x[\tau_D] < \infty$ for each $x \in G$.
\item [(ix)] The family $\{\pi^r_\tau : \tau \leq \tau_D, \tau \in \mc{T}\}$ is $P_x$-uniformly integrable for any $x \in G$: This is immediate, using the fact that $\pi^r$ is bounded.
\end{itemize}

Having checked all the hypotheses of the verification theorem, we deduce that $f(x) = V^r_{\hat{\pi}}(x)$.
\end{proof}

\section{A rough algorithm for quickest search}

Using the methods of the previous sections, we can describe near optimal algorithms for quickly finding a channel which satisfies hypothesis $H_1$.  We outline a procedure below for finding an $\bm{\epsilon}$-optimal strategy.
\begin{itemize}
\item[(1)] Fix $\bm{\epsilon} >0$.
\item[(2)] For given values of $c,\hat{\pi}$, calculate the threshold $\pi^* = \pi^*(c,\hat{\pi})$ via Lemma \ref{lemma2.3}.  Let $\tau \triangleq \inf \{t \geq 0 : \pi^r_t = \pi^*\}$ be defined for any version of $\pi^r$.
\item[(3)] Choose $t>0$ sufficiently large so that $P(\tau > t) < \frac{\bm{\epsilon}}{4}$.  This can be done, for example, by calculating $E[\tau]$ via the speed measure of $\pi^r$.
\item[(4)] Choose $\epsilon_2$ sufficiently small so that $16te^{32t}\epsilon_2^2 + \epsilon_2< \frac{\bm{\epsilon}}{2}$.
\item[(5)] Adopt the switching strategy $\Phi^{\epsilon_2}$, in which the observed channel is switched whenever the posterior level hits $\hat{\pi} - \epsilon_2$.  
\item [(6)] The switching strategy $\Phi^{\epsilon_2}$ induces the observed Brownian Motion $W = W^{\Phi^{\epsilon_2}}$.  Using $W$, construct the solution to the SDE $Y_t = \hat{\pi} + \int_0^t \sigma(Y_s + A_s(Y))dW_s$, and set $X_t = Y_t + A_t(Y)$.  Let $\tau^* = \inf \{t \geq 0 : X_t = \pi^*\}$.
\item[(7)] At time $\tau^*$, accept hypothesis $H_1$ for the channel which is currently being observed.  
\end{itemize}
Applying the reasoning of Lemmas \ref{lemma2.3} and \ref{lemma2.2}, we may deduce that this observation/stopping strategy will be $\bm{\epsilon}$-optimal.

\section{Numerical Results}

In this section, we illustrate our previous results by computing the optimal threshold level for various levels of the observation cost $c$ and prior $\hat{\pi}$.  The data below, which can be found in the second appendix, is directly calculated from the value function established in Section $4$.  We first plot the threshold levels against the observation cost $c$, when the prior $\hat{\pi}$ is fixed.  As indicated by Tables \ref{fig:table1} and \ref{table2} below, for fixed $\hat{\pi}$, $\pi^*(c)$ decreases with $c$.  This is not surprising, because the higher the running cost for observations, the lower one's standards will be for accepting the hypothesis that a channel satisfies $H_1$.  

Next, we plot the threshold levels against the prior $\hat{\pi}$, when the observation cost $c$ is fixed.  As indicated by Tables ~\ref{table3} and ~\ref{table4} below, for fixed $c$, $\pi^*(\hat{\pi})$ increases with $\hat{\pi}$.  Again, this is not surprising.  The higher the prior belief that all channels satisfy $H_1$, the more restrictive one should should be in selecting a channel believed to satisfy that hypothesis.

\section{Appendix: Tables of data}

\begin{table}[h!]
\caption{Optimal thresholds $\pi^*(c)$, for $\hat{\pi} = .5$.}
\begin{tabular}{l | c || c | r}
\hline
$c$ & $\pi^*(c)$ & $c$ & $\pi^*(c)$ \\ \hline
.0025 & .995  & .05 & .865 \\
.005 & .989  & .06 & .840 \\
.01 &  .977  & .07 & .815 \\
.02 & .950   & .08 & .793 \\
.03 & .922 & .09 & .773 \\
.04 & .893 & .1 & .755 \\
\hline
\end{tabular}
\label{fig:table1}
\end{table}

\begin{table}[h!]
\caption{Optimal thresholds $\pi^*(c)$, for $\hat{\pi} = .75$.}
\begin{tabular}{l | c || c | r}
\hline
$c$ & $\pi^*(c)$ & $c$ & $\pi^*(c)$ \\ \hline
.0025 & .995  & .05 & .911 \\
.005 & .990  & .06 & .899 \\
.01 &  .979  & .07 & .889 \\
.02 & .959   & .08 & .879 \\
.03 & .941 & .09 & .871 \\
.04 & .925 & .1 & .864 \\
\hline
\end{tabular}
\label{table2}
\end{table}

\begin{table}[h!]
\caption{Optimal thresholds $\pi^*(\hat{\pi})$, for $c = .01$}
\begin{tabular}{l | c || c | r}
\hline
$\hat{\pi}$ & $\pi^*(\hat{\pi})$ & $\hat{\pi}$ & $\pi^*(\hat{\pi})$ \\ \hline
.025 & .704 & .5 & .977 \\
.05 & .951 & .6 & .978 \\
.1 & .968 & .7 & .979 \\
.2 & .973 & .8 & .980 \\
.3 & .975 & .9 & .982 \\
.4 & .976 & .95 & .985 \\
\hline
\end{tabular}
\label{table3}
\end{table}

\begin{table}[h!]
\caption{Optimal thresholds $\pi^*(\hat{\pi})$, for $c = .03$}
\begin{tabular}{l | c || c | r}
\hline
$\hat{\pi}$ & $\pi^*(\hat{\pi})$ & $\hat{\pi}$ & $\pi^*(\hat{\pi})$ \\ \hline
.025 & .041 & .5 & .922 \\
.05 & .164 & .6 & .930 \\
.1 & .690 & .7 & .937 \\
.2 & .867 & .8 & .946 \\
.3 & .898 & .9 & .960 \\
.4 & .913 & .95 & .972 \\
\hline
\end{tabular}
\label{table4}
\end{table}

\pagebreak

\bibliographystyle{plain}
\bibliography{seqbib}

\end{document}